\documentclass[twoside,11pt]{article}

%

\usepackage{jmlr2e}
\usepackage{amsmath}
\usepackage[pdftex]{color}
\usepackage{tikz}
\DeclareGraphicsExtensions{.jpg,.jpeg,.pdf,.png,.mps,.eps,.ps}

\let\hat\widehat

\usepackage{algorithm}
\usepackage{algpseudocode}
\algnewcommand\algorithmicinput{\textbf{INPUT:}}
\algnewcommand\INPUT{\item[\algorithmicinput]}
\algnewcommand\algorithmicoutput{\textbf{OUTPUT:}}
\algnewcommand\OUTPUT{\item[\algorithmicoutput]}

\usepackage{color}


\jmlrheading{1}{2013}{1-48}{4/00}{10/00}{Fabrizio Lecci, Alessandro Rinaldo and Larry Wasserman}


\ShortHeadings{Statistical Analysis of Metric Graph Reconstruction}{Lecci, Rinaldo and Wasserman}
\firstpageno{1}

\begin{document}

\title{Statistical Analysis of Metric Graph Reconstruction}

\author{\name Fabrizio Lecci \email lecci@cmu.edu \\
       \name Alessandro Rinaldo \email arinaldo@cmu.edu \\
       \name Larry Wasserman \email larry@cmu.edu \\
       \addr Department of Statistics\\
       Carnegie Mellon University\\
       Pittsburgh, PA 15213, USA
}


\maketitle

\begin{abstract}
\noindent A metric graph is a 1-dimensional stratified metric space consisting of
vertices and edges or loops glued together. Metric graphs can be naturally used to represent and model data
that take the form of noisy filamentary structures, such as 
street maps, neurons, networks of rivers and galaxies.  We
consider the statistical problem of reconstructing the topology of a
metric graph embedded in $\mathbb{R}^D$ from a random sample. 
We derive lower and upper bounds on the
minimax risk for the noiseless case and tubular noise case. The upper
bound is based on the reconstruction algorithm given in
\citet{aanjaneya2012metric}.
\end{abstract}

\begin{keywords}
Metric Graph, Filament, Reconstruction, Manifold Learning, Minimax Estimation
\end{keywords}

\section{Introduction}

We are concerned with the problem of estimating the topology of
filamentary data structure. Datasets consisting of points roughly
aligned along intersecting or branching filamentary paths embedded in
2 or higher dimensional spaces have become an increasingly common
type of data in a variety of scientific areas. For instance, road reconstruction 
based on GPS traces,
localization of earthquakes faults, galaxy reconstruction are 
all instances of a more general problem of
estimating basic topological features of an underlying filamentary
structure.  The recent paper by \citet{aanjaneya2012metric}, upon
which our work is based, contains further applications, as well as
numerous references.  To provide a more concrete example, consider
Figure \ref{plot3D}. The left hand side displays raw data portraying a
neuron from the hippocampus of a rat \citep{gulyas1999total}. The data
were obtained from NeuroMorpho.Org \citep{ascoli2007neuromorpho}. The
right hand side of the figure shows the output of the metric graph
reconstruction obtained using the algorithm analyzed in this paper,
originally proposed by \citet{aanjaneya2012metric}.  The
reconstruction, which takes the form of a graph, captures perfectly
all the topological features of the neuron, namely, the relationship
between the edges and vertices, the number of branching points and the
degree of each node.

Metric graphs provide the natural geometric framework for representing
intersecting filamentary structures. A metric graph embedded in a
$D$-dimensional Euclidean space ($D \geq 2$) is a 1-dimensional
stratified metric space. It consists of a finite number of points
(0-dimensional strata) and curves (1-dimensional strata) of finite
length, where the boundary of each curve is given by a pair (of
not-necessarily distinct) vertices (see the next section for a formal
definition of a metric graph). 

In this paper we study the problem of reconstructing the topology of
metric graphs from possibly noisy data, from a statistical point of
view. Specifically, we assume that we have a sample of points from a
distribution supported on a metric graph or in a small neighborhood
and we are interested in recovering the topology of the
corresponding metric graph.  To this end, we use the metric graph reconstruction algorithm given in
\citet{aanjaneya2012metric}. 
Furthermore, in our
theoretical analysis we characterize explicitly the minimal sample
size required for perfect topological reconstruction as a direct
function of parameters defining the shape of the metric graph,
introduced in Section \ref{section::background}. This leads to an upper
bound on the risk of topological reconstruction.
Finally, we obtain a lower bound on the risk of
topological reconstruction, which, in the noiseless case,
almost matches the derived upper bound, indicating that the algorithm
of \cite{aanjaneya2012metric} behaves nearly optimally.

\vspace{0.35cm}

{\em Outline.}
In Section
\ref{section::background}
we formally define metric graphs,
the statistical models we will consider and the assumptions we will use throughout. We will also describe several geometric  quantities that are central to our analysis.
Section \ref{sec:performance} contains detailed analysis of the performance of algorithm of \citet{aanjaneya2012metric} for metric graph reconstruction, under modified settings and assumptions. 
In Section \ref{section::minimax}
we derive lower and upper bounds for the minimax risk of metric graph reconstruction problem.
In Section \ref{section::conclusion}
we conclude with some final comments.

\vspace{0.35cm}

{\em Related Work.}  The work most closely related to ours is
\citet{aanjaneya2012metric} which was, in fact, the motivation for our
work. From
the theoretical side, we replace the key assumption in
\citet{aanjaneya2012metric} of the sample being a
$(\varepsilon,R)$-approximation to the underlying metric graph, by the
milder assumption of the sample being dense in a neigborhood of the metric
graph. 
Approximation and reconstruction of metric graphs has also been considered in \cite{chazal2013gromov} and \cite{ge2011data}.
Metric graph reconstruction is related to the problem of estimating
stratified spaces (basically, intersecting manifolds).  Stratified
spaces have been studied by a number of authors such as
\cite{bendich2010towards, bendich2012local, bendich2008analyzing}.  A
spectral method for estimating intersecting structures is given in
\cite{arias2011spectral}.  There are a variety of algorithms for
specific problems, for example, see \cite{ahmed2012probabilistic,
  chen2010road} for the reconstruction of road networks.
Finally, \citet{chernov2013reconstructing}
derived an alternative algorithm that uses
ideas from homology.

\begin{figure}[ht!]
\centering
\includegraphics[scale=0.34]{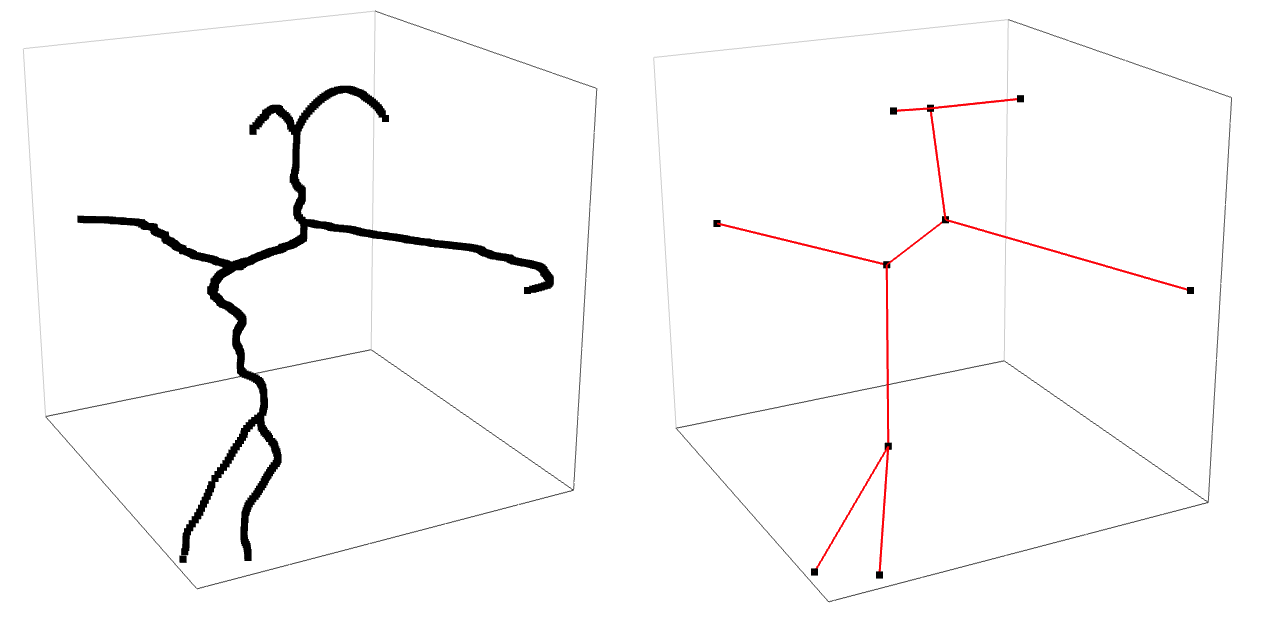}
\caption{Left: Neuron cr22e from the hippocampus of a rat; NeuroMorpho.Org \citep{ascoli2007neuromorpho}.
Right: A metric graph reconstruction of the neuron.}
\label{plot3D}
\end{figure}

\section{Background and Assumptions}
\label{section::background}

The assumptions in \citet{aanjaneya2012metric} lead to a  
reconstruction process that is aimed at capturing the intrinsic structure of the data and is somewhat oblivious to its extrinsic embedding. The authors assume that the sample comes with a metric that is close to
the intrinsic metric of the underlying graph, by imposing a limit on the Gromov-Hausdorff distance between the two metrics.
By considering data embedded in the Euclidean space and focusing on
the topological aspect, we show that the notion of \textit{dense sample} is sufficient
to guarantee a 
correct reconstruction.

In this section we provide background on metric graph spaces and
describe the assumptions and the geometric parameters that we will be
using throughout. \\
Informally, a metric graph is a collection of vertices and edges glued
together in some fashion. Here
we state the formal definitions of path metric space and metric
graph. For more details see \citet{aanjaneya2012metric} and
\citet{kuchment2004quantum}.

\begin{definition}
A metric space $(G, d_G)$ is a path metric space if the 
distance between any pair of points is equal to the infimum of the lengths of the continuous curves joining them.
A metric graph is a path metric space $(G, d_{G})$ that is homeomorphic to a 1-dimensional stratified space. 
A vertex of $G$ is a 0-dimensional stratum of $G$ and an edge of $G$ is a 1-dimensional stratum of $G$.
\end{definition}

We will consider metric graphs embedded in $\mathbb{R}^D$. Note that, if one ignores the metric structure, namely the length of edges and loops, the shape or topology of a metric graph $(G,d_G)$ is encoded by a graph, whose vertices and edges correspond to vertices and edges of $G$. Since we allow for two vertices to be connected by more than one edge we are actually dealing with pseudographs. 
We recall that an undirected pseudograph $(V,E)$ is a set of vertices $V$, a multiset $E$ of unordered pairs of (not necessarily distinct) vertices. To a given pseudograph we can associate a function $f : E \rightarrow V \times V$, which, when applied to an edge $e \in E$, simply extracts the vertices to which $e$ is adjacent. Thus, if $e_1,e_2 \in E$ are such that  $f(e_1) = f(e_2)$, then $e_1$ and $e_2$ are parallel edges. Similarly, if $e \in E$ is such that $f(e) = \{v,v\}$ for some $v \in V$, then $e$ is a loop. For each pair $(u,v) \in V \times V$, let $\nu(u,v) = |f^{-1}(\{u,v\})|$ if $\{u,v\}  \in E$ and $0$ otherwise. In particular, $\nu(u,v)$ is the number of edges between $u$ and $v$ (or loops if $u=v$). 

We say that a metric graph reconstruction algorithm perfectly recovers the
topology of $G$ if outputs a pseudograph isomorphic to the pseudograph representing the topology
of $G$.

We now define some key  quantities regarding the structure of a metric graph. 
We start with the definition of reach. 
Let $M$ be a 1-dimensional manifold embedded in
$\mathbb{R}^D$.
Let $T_uM$ denote the 1-dimensional tangent space to $M$ 
and let $T_u^\perp$M be the $(D-1)$-dimensional normal space.

\begin{definition}
Define the \textit{fiber}  of size $a$ at $u \in M$ to be $L_a(u,M)=T_u^\perp M \bigcap B(u,a)$, where $B(u,a)$ is the $D$-dimensional ball of radius $a$ centered at $u$. If $M$ has boundary $\{v_1,v_2\}$, the fiber of size $a$ at $v_i$ is defined as the limit of $L_a(u,M)$, as $u$ approaches $v_i$ in $M\backslash \{v_1,v_2\}$. The reach of $M$ is the largest number $\tau$ such that the fibers $L_\tau(u,M)$ never intersect. 
\end{definition}

The reach sets a limit on the curvature of a manifold. 
A manifold with large reach does not come too close to be
self-intersecting. For example the reach of an arc of a
circle is equal to its radius.
The quantity $1/\tau$ is called the \textit{condition number} in \cite{niyogi2008finding}. For more details see also
\citet{federer1959curvature, chazal2006topology, genovese2012minimax}. \\
Each edge of a metric graph $(G,d_G)$ can be seen as a 1-dimensional
manifold with boundary. 
Let the {\em local reach} of metric graph $G$ be the minimum reach associated to an edge of $G$. 

When 2 edges intersect at a vertex $v$ they
create an angle, 
where the angle between two intersecting curves
is formally defined as follows. 
Suppose that $e_1$ and $e_2$ intersect at $x$. Let $B(x,\epsilon)$ be the $D$-dimensional ball 
of radius $\epsilon$ centered at $x$. 
Let $\ell_1(\epsilon)$ be the line segment joining the two points
$x$ and $\partial B(x,\epsilon)\bigcap e_1$.
Let $\ell_2(\epsilon)$ be the line segment joining the two points
$x$ and $\partial B(x,\epsilon)\bigcap e_2$.
Let $\alpha_\epsilon(e_1,e_2)$ be the angle between 
$\ell_1(\epsilon)$ and $\ell_2(\epsilon)$.
The angle between $e_1$ and $e_2$ is 
$\alpha(e_1,e_2) = \lim_{\epsilon\to 0}\alpha_\epsilon(e_1,e_2)$.
We assume that, for each pair of intersecting edges $e_1$ and $e_2$,
the angle $\alpha(e_1,e_2)$ is well-defined.

To control points far away in the graph distance, but close in the embedding space, 
we define 
$$
A_G=\{(x,x') \in G\times G: d_{G}(x,x')\geq \min(b,\tau \alpha)\},
$$
where $b$ is the shortest edge of $G$, $\tau$ is the local condition number and $\alpha$ is the
smallest angle formed by two edges of $G$.
We define the {\em global reach} as the infimum of the Euclidean distances among 
pairs of point in $A_G$, that is  $\xi=\inf_{A_G}\Vert x-x'\Vert_2$.

Let $(G,d_G)$ be a metric graph and, for a constant $\sigma \geq 0$,
let $G_\sigma = \{y:\ \inf_{x\in G}||x-y||_2 \leq \sigma\}$ be the
$\sigma$-tube around $G$. If $\sigma=0$, then,
trivially, $G_\sigma = G$. Notice that $G_\sigma$ is a
set of dimension $D$ if $\sigma > 0$.

We will use the assumption that the sample $\mathbb{Y}$ is sufficiently dense
in $G_\sigma$ with respect to the Euclidean metric, as formalized below.

\begin{definition}
The sample
$\mathbb{Y}=\{y_1,\dots, y_n \} \subset G_\sigma \subset \mathbb{R}^D $ is $\frac{\delta}{2}$-dense in
$G_\sigma $ if for every $x \in G_\sigma$, there exists a $y \in \mathbb{Y}$ such that $\Vert x-y \Vert_2 < \frac{\delta}{2}$.
\label{dense}
\end{definition}

The problem of metric graph
reconstruction consists of reconstructing a metric graph $G$ given a
dense sample $\{y_1, \dots, y_n\} = \mathbb{Y} \subset G_\sigma$
endowed with a distance $d_{\mathbb{Y}}$, which could be the
$D$-dimensional Euclidean distance or some more complicate notion of distance. 
If $\sigma = 0$ we say that the sample $\mathbb{Y}$
is noiseless, while if $\sigma>0$, we say that $\mathbb{Y}$ is a noisy
sample.

Throughout our analysis we restrict
the attention to metric graphs embedded in $\mathbb{R}^D$ that satisfy the following assumptions:
\begin{itemize}
\item[{\bf A1}]  The graphs have finite total length and are free of
nodes of degree $2$ (though they may contain vertices of degree $1$ or
$3$ and higher).  
\item[{\bf A2}] Each edge is a smooth embedded sub-manifold
of dimension 1, of length at least $b>0$ and with reach at least $\tau>0$.
\item[{\bf A3}] Each pair of intersecting edges forms a well-defined angle of size at least $\alpha>0$.
\item[{\bf A4}] The global reach is at least $\xi>0$.
\end{itemize}
Assumptions A1 and A2 allow us  to consider each edge 
of a metric graph as a single smooth curve. 
A3 and A4 are additional regularity conditions on the separation between different edges.
Assumptions similar to A1-A4 are common in the literature. 
For different regularity conditions that allow for corners within an edge see, for example, 
\cite{chazal2009sampling} and 
\citet{chen2010road}.

Let $\mathcal{G}$ be the set of metric graphs embedded in $\mathbb{R}^D$ that satisfy assumptions A1, A2, A3 and A4, involving the parameters $b$, $\alpha$, $\tau$, $\xi$. 
We consider two noise models:\\

\textit{Noiseless.} We observe data $Y_1, \dots, Y_n \sim P$, where $P \in \mathcal{P}$, a collection of probability distributions supported
over metric graphs $(G,d_G)$ in $\mathcal{G}$ having densities $p$ with
respect to the length of $G$ bounded from below by a
constant $a>0$.  \\

\textit{Tubular Noise.} We observe data $Y_1, \dots, Y_n \sim P_{G,\sigma}$ where $P_{G,\sigma}$ is uniform on the $\sigma$-tube $G_\sigma$. In this case we consider the collection $\mathcal{P}=\{P_{G,\sigma}: G \in \mathcal{G} \}$.\\

We are interested in bounding the minimax risk
\begin{equation}
R_n= \inf_{\hat G} \sup_{P \in \mathcal{P}} P^n \Big(\hat G \not\simeq G  \Big),
\end{equation}
where the infimum is over all estimators $\hat G$ of the topology of $(G, d_G)$, the supremum is over the class of distributions $\mathcal{P}$ for 
$\mathbb{Y}$ and  $\hat G \not\simeq G $ means that $\hat G$ and $G$ are not isomorphic. \\
In Section \ref{section::minimax} we will find lower and upper bounds for $R_n$ in the noiseless case and the tubular noise case.\\
We conclude this section by summarizing the many parameters and symbols involved in our analysis. See Table \ref{tab::param}.
\vspace{0.7cm}

\begin{table}[h!]
\begin{center}
\begin{tabular}{| c | c |}
\hline
{\bf Symbol} & {\bf Meaning} \\ \hline \hline
$(G,d_G)$ & metric graph  \\ \hline
$\alpha$ & smallest angle  \\ \hline
$b$ & shortest edge \\ \hline
$\tau$ & local reach \\ \hline
$\xi$ & global reach \\ \hline
$\mathcal{G}$ & set of metric graphs embedded in $\mathbb{R}^D$, satisfying A1-A4 \\ \hline
$\mathcal{P}$ & set of distributions on $G$ or $G_\sigma$ \\ \hline \hline
$G_\sigma$ & $\sigma$ tube around $G$ \\ \hline
$\mathbb{Y}$ & sample, subset of $G_\sigma$ \\ \hline
$\delta$ & $\mathbb{Y}$ is a $\delta/2$-dense sample \\ \hline
\end{tabular}
\end{center}
\caption{Summary of the symbols used in our analysis.}
\label{tab::param}
\end{table}

\section{Performance Analysis for the Algorithm of \citet{aanjaneya2012metric}}
\label{sec:performance}
In this section we study the performance of the metric graph reconstruction algorithm of \citet{aanjaneya2012metric}, under assumptions A1-A4 and with a choice of parameters adapted to our setting. In Section \ref{section::minimax} we will use these results to derive bounds on the minimax rate for topology reconstruction. The metric graph reconstruction algorithm is presented in Algorithm \ref{alg::MGR}.

\begin{algorithm}[!hb]
Input: sample $\mathbb{Y},d_{\mathbb{Y}}, r, p_{11}$.\\
\small
1: \textbf{Labeling points as edge or vertex}\\
2: for all $y \in \mathbb{Y}$ do\\
3: \hspace{0.5cm} $S_y \leftarrow B(y,r+\delta)\backslash B(y,r)$\\
4: \hspace{0.5cm} $\text{deg}_r(y) \leftarrow $ Number of connected components of Rips-Vietoris graph $\mathcal{R}_{\delta}(S_y)$\\
5: \hspace{0.5cm} if $\text{deg}_r(y)=2$ then\\
6: \hspace{0.5cm} \hspace{0.5cm} Label $y$ as a edge point\\
7: \hspace{0.5cm} else\\
8: \hspace{0.5cm} \hspace{0.5cm} Label $y$ as a preliminary vertex point.\\
9: \hspace{0.5cm} end if\\
10: end for.\\
11: Label all points within Euclidean distance $p_{11}$ from a preliminary vertex point as vertices.\\
12: Let $\mathbb{E}$ be the point of $\mathbb{Y}$ labeled as edge points.\\
13: Let $\mathbb{V}$ be the point of $\mathbb{Y}$ labeled as vertices.\\ 
14: \textbf{Reconstructing the graph structure}\\
15: Compute the connected components of the Rips-Vietoris graphs $\mathcal{R}_\delta(\mathbb{E})$ and $\mathcal{R}_\delta(\mathbb{V})$.\\
16: Let the connected components of $\mathcal{R}_\delta(\mathbb{V})$ be the vertices of of the reconstructed graph $\hat G$.\\
17. Let there be an edge between vertices of $\hat G$ if their corresponding connected components
in $\mathcal{R}_\delta(\mathbb{V})$ contain points at distance less than $\delta$ from the same connected component of $\mathcal{R}_\delta(\mathbb{E})$.\\
Output: $\hat G$.
\caption{Metric Graph Reconstruction Algorithm}
\label{alg::MGR}
\end{algorithm}

The algorithm takes a (possibly noisy) sample $\mathbb{Y}$ from a
metric graph $G$ and a distance $d_\mathbb{Y}$ defined on $\mathbb{Y}$
and returns a graph $\hat G$ that approximates $G$.
The key idea is the following: a shell of radius $r$ is constructed around each point in the sample, which is labeled \textit{edge point} if its shell contains 2 well separated clusters of sampled points and \textit{vertex point} otherwise. Several steps of the algorithm require the construction of a Rips-Vietoris graph of parameter $\delta$: $\mathcal{R}_\delta(S_y)$ is a graph whose vertices are all the points of $S_y$ and there is an edge between two points if the Euclidean distance between them is not larger than $\delta$.
At Step 11 some of the edge points that are close to vertices are re-labeled as vertex points. This expansion guarantees a precise borderline between clusters of vertex points and clusters of edge points. At steps 15-17 each of these clusters is associated to a vertex or to an edge of the reconstructed graph $\hat G$.
We will analyze the algorithm considering the Euclidean distance on the
sample $\mathbb{Y}$, that is, $d_\mathbb{Y}= \Vert \cdot \Vert_2$.
The inner radius of the shell at Step 3 and the width of the
expansion at Step 11 are parameters the user has to specify. 

Before finding how dense a sample has to be in orderer to guarantee a correct reconstruction of a metric graph, we show that it is sufficient to study a particular metric graph embedded in $\mathbb{R}^2$, which represents the worst case. In other words, if the metric graph algorithm can reconstruct this particular planar graph, then it can reconstruct any other metric graph that satisfies A1-A4.

\subsection{The worst case: a metric graph in $\mathbb{R}^2$}
\label{sec::worst}
The worst case is the one for which it is hard to distinguish two edges that intersect at a vertex because they are too close in the embedding space.\\
Figure \ref{fig::worst} (top left) shows an edge $e$ that intersects two edges $e_1,e_2$ with reach $\tau$, forming an angle $\alpha$ at vertex $x$. For simplicity, we consider this metric graph embedded in $\mathbb{R}^3$ ($D=3$). Therefore Figure \ref{fig::worst} shows the projections of $e, e_1$ and $e_2$ on the (limit) plane formed by $e_1$ and $e_2$, passing through $x$.

\begin{figure}[ht!]
\centering
\includegraphics[scale=0.82]{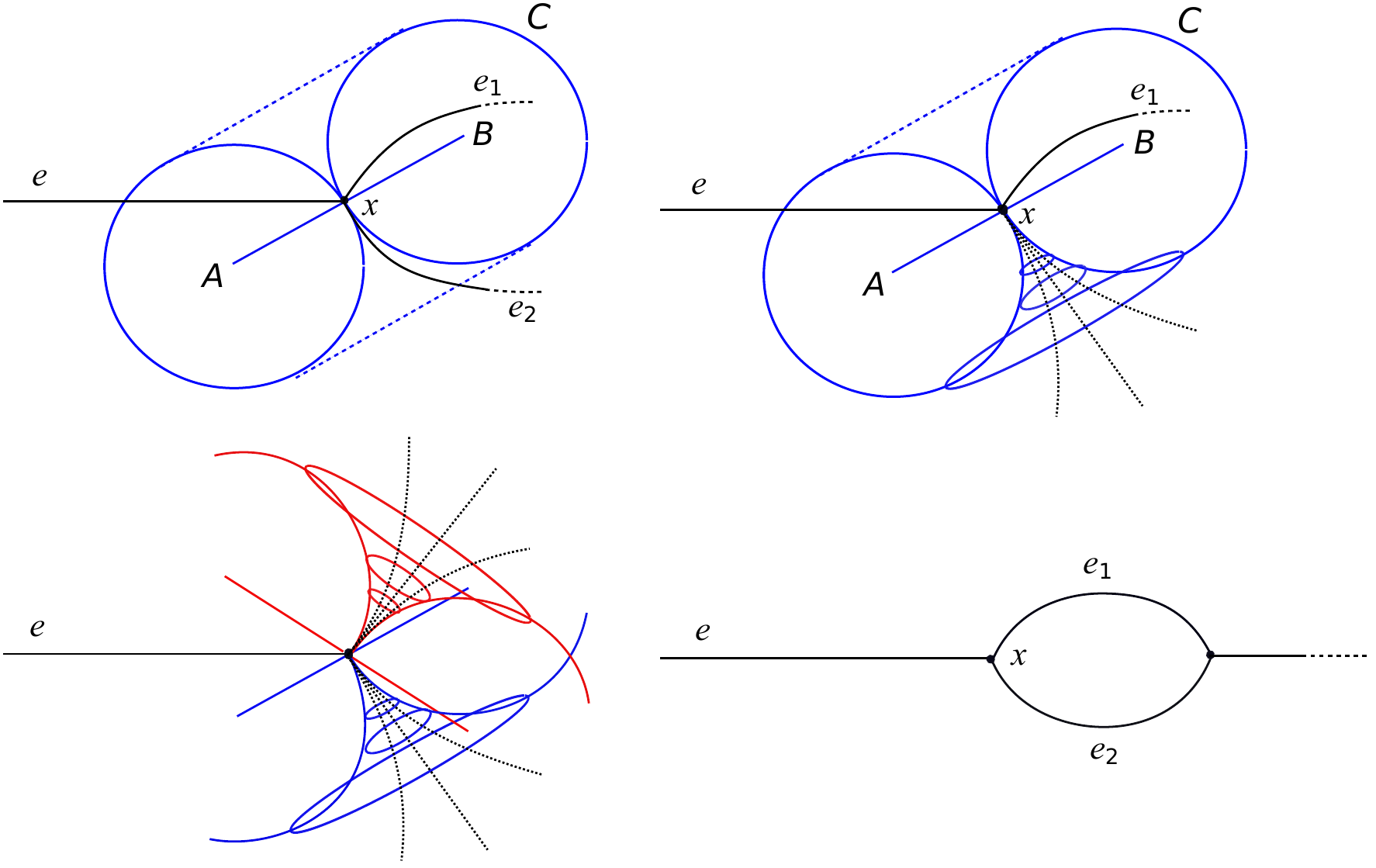}
\caption{Even in the worst case, edges $e_1$ and $e_2$ must lie outside of the torii constructed on the fibers $L_\tau(x,e_1)$ and $L_\tau(x,e_2)$.}
\label{fig::worst}
\end{figure}

We focus on edge $e_2$. 
The blue segment $\overline{AB}$ is the projection of $L_\tau(x,e_2)$, the fiber of size $\tau$ around $x$. In $\mathbb{R}^3$, $L_\tau(x,e_2)$ is a circle of radius $\tau$ centered in $x$.
By definition, for any $y \in e_2$, the fiber $L_\tau(y,e_2)$ can not intersect the fiber $L_\tau(x,e_2)$, otherwise the assumption involving the reach would be violated. We represent this condition by taking a circle $C$ of radius $\tau$ centered at $B$ and rotating it around $x$ along the circumference of $L_\tau(x,e_2)$. This procedure forms a torus with an inner loop of radius 0. Edge $e_2$ must lie outside of this torus, so that its fibers do not intersect $L_\tau(x,e_2)$.  See the top right plot of Figure \ref{fig::worst}.

The same reasoning applies to edge $e_1$, which must lie outside of the torus constructed on $L_\tau(x,e_1)$. See the bottom left plot. The worst case is the one for which $e_1$ and $e_2$ are as close as possible: on the same plane and on the boundaries of the two tori. This case is represented in the bottom right plot of Figure \ref{fig::worst}.
Note that $e_1$ and $e_2$ are simply arcs of circles of
radius $\tau$.

\begin{figure}[h]
\centering
\includegraphics[scale=0.82]{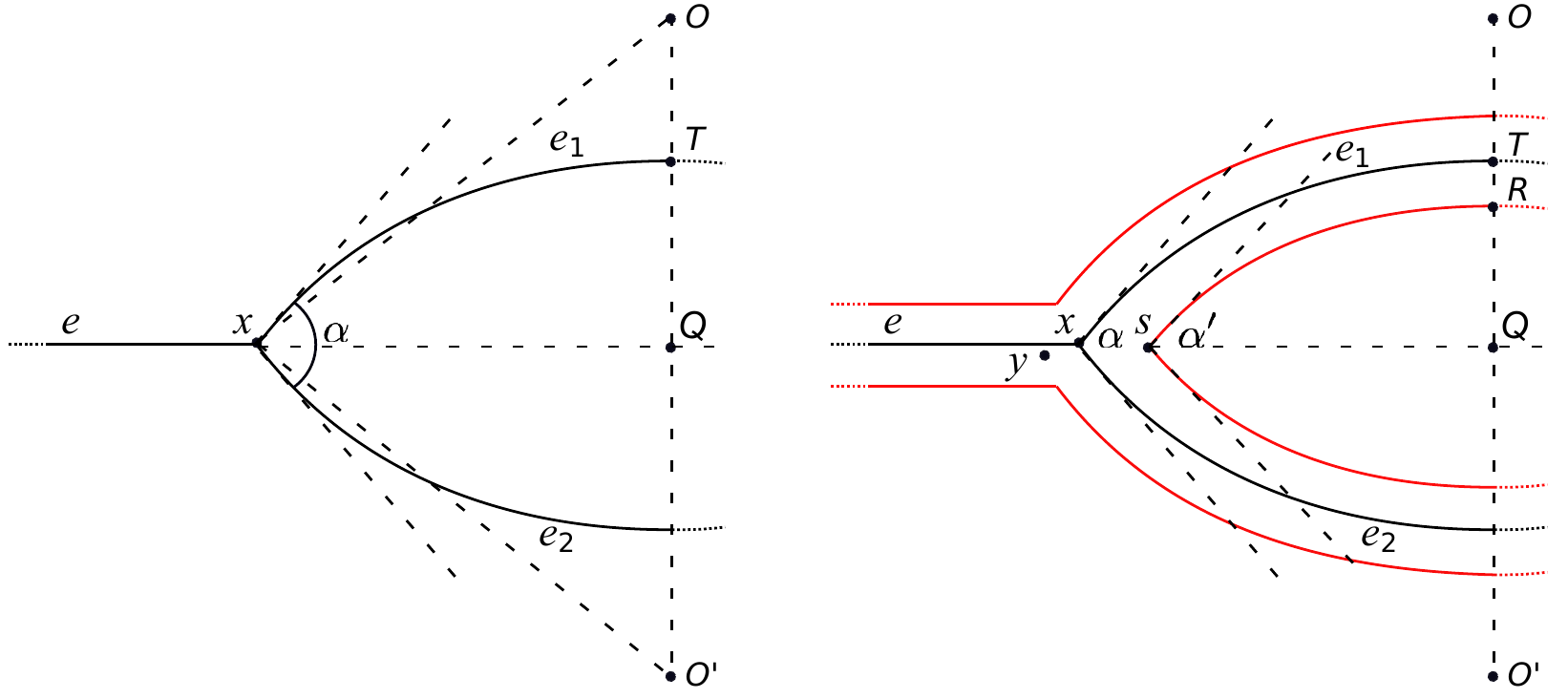}
\caption{Left: edges $e_1$ and $e_2$ with minimum reach $\tau$ forming the smallest 
angles $\alpha$ at vertex $x$. Right: same metric graph with a tube of radius $\sigma$ around it.}
\label{plot1aa}
\end{figure}

We will use basic trigonometric properties of the worst case.
In Figure \ref{plot1aa} (left), $O$ and $O'$ are the centers of the circles associated
to edges $e_1$ and $e_2$. 
It is easy to see that angle $O\hat x O'$
has width $\pi-\alpha$.
It can be
shown that
\begin{align}
x\hat O O' &= \alpha/2, \label{trick1}\\
T\hat x Q&=\alpha/4. \label{trick2}
\end{align}
Let  $\mathbb{Y}$ be a noisy sample of $G$. In other words $\mathbb{Y}$ is a subset of $G_\sigma$, the 
tube of radius $\sigma \geq 0$ around the metric graph $G$. See Figure \ref{plot1aa} (right).
Let $Q$ be the midpoint of segment $\overline{OO'}$ and let $T$ be the
intersection point of $\overline{OO'}$ and edge $e_1$.  
For $0 \leq \sigma \leq \overline{QT}= \tau-\tau \cos(\alpha/2)$, the smallest angle formed by the inner faces of
the tube around the metric graph is 
\begin{equation}
\label{eq::alphaPrime}
\alpha'= \pi - \arccos
\frac{2(\tau-\sigma)^2 - 4 \tau^2 \cos^2(\alpha/2)}{2 (\tau-\sigma)^2},
\end{equation}
where we applied the cosine law to the triangle $OsO'$ and the fact
that angle $O\hat s O'$ has width $\pi-\alpha'$.
Note that if $\sigma=0$ then $\alpha'= \alpha$.
As in \eqref{trick2}, it can be shown that 
\begin{equation}
\label{trick3}
R\hat s Q= \alpha' /4.
\end{equation}

The few basic trigonometric equations described above will be used to determine under which conditions on $b, \alpha, \tau, \xi, \sigma$ the metric graph reconstruction algorithm can reconstruct the worst case.

\subsection{Analysis of Algorithm \ref{alg::MGR} with Euclidean distance}
In this section we analyze Algorithm \ref{alg::MGR}. It is sufficient to study the worst case of figures \ref{fig::worst} and \ref{plot1aa} and extend the results to any metric graph in $\mathbb{R}^D$.
The Euclidean distance is used at every step of the algorithm, which requires the specification of $r$, the inner radius of the shell, and
$p_{11}$, the parameter governing the expansion of Step 11. We set
\begin{equation}
\label{eq::inner}
r= \frac{\delta}{2}+\sigma + \tau \sin(\alpha/2) - (\tau-\sigma) \sin(\alpha'/2) + \frac{\delta}{2 \sin(\alpha'/4)}
\end{equation}
and
\begin{equation}
\label{eq::p11}
p_{11}=   \frac{\delta}{2} + \tau \sin(\alpha/2) - (\tau-\sigma) \sin(\alpha'/2)+\frac{r+\delta}{\sin(\alpha'/2)} 
\end{equation}
This choice is justified in the proof of Proposition \ref{propAlg}.\\
Define
\begin{align}
&f(b,\alpha,\tau,\xi,\sigma) \nonumber :=\\
& \frac{(\tau-\sigma)\sin\left(\frac{\min(b, \alpha\tau) - (\alpha-\alpha')\tau}{2\tau}\right) - \left[ \tau \sin(\alpha/2) - (\tau-\sigma) \sin(\alpha'/2)\right] \left(1+\frac{2}{\sin(\alpha'/2)} \right) -  \frac{2\sigma}{\sin(\alpha'/2)} }{1+3[\sin(\alpha'/2)]^{-1}+[\sin(\alpha'/2)\sin(\alpha'/4)]^{-1} },\label{def::f}
\end{align}
where $\alpha'$ is given in \ref{eq::alphaPrime}.
Note that $f(b,\alpha,\tau,\xi,\sigma) $ is a decreasing function of $\sigma$.

\begin{proposition}
If  $\mathbb{Y}$ is $\frac{\delta}{2}$-dense in $G_\sigma$ and 


\begin{equation}
0 < r+\delta< \xi - 2\sigma,
\label{cond1}
\end{equation}
\begin{equation}
0 < \delta < f(b,\alpha,\tau,\xi,\sigma),
\label{cond2}
\end{equation}
then the graph $\hat G$ provided by Algorithm 1 (input: $\mathbb{Y},\Vert \cdot \Vert_2, r, p_{11}$) is isomorphic to $G$.
\label{propAlg}
\end{proposition}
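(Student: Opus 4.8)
The plan is to verify that, under conditions \eqref{cond1} and \eqref{cond2} together with the prescribed choices \eqref{eq::inner} and \eqref{eq::p11}, every stage of Algorithm \ref{alg::MGR} behaves as intended, so that the labeling phase correctly separates edge neighborhoods from vertex neighborhoods and the reconstruction phase recovers the true incidence pseudograph. As observed in Section \ref{sec::worst}, it suffices to treat the worst-case planar configuration of Figures \ref{fig::worst} and \ref{plot1aa}: two circular arcs of reach $\tau$ meeting at a vertex with the smallest admissible angle $\alpha$, under tube noise $\sigma$. Condition \eqref{cond1}, $r+\delta<\xi-2\sigma$, guarantees that the outer ball $B(y,r+\delta)$ used to build each shell $S_y$ never reaches a part of $G$ that is distant in the graph metric but close in the embedding: by the definition of the global reach $\xi$, any such intrusion would force a Euclidean distance below $\xi$, and the $2\sigma$ slack absorbs the displacement of both $y$ and the foreign structure off $G$. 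Hence each local analysis may be carried out as if the shell saw only the edges incident to the relevant point.

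For the labeling phase I would argue two cases. First, for a sample point $y$ whose projection onto $G$ lies on an edge and is farther than $p_{11}$ from every vertex, the shell $S_y$ meets the edge in two arms. The $\tfrac{\delta}{2}$-density of $\mathbb{Y}$ in $G_\sigma$ ensures that the sample points along each arm form a single Rips-$\delta$ component (overlapping $\tfrac{\delta}{2}$-balls place consecutive points within $\delta$), while the reach bound $\tau$ controls the curvature so that the two arms remain more than $\delta$ apart across the annulus of inner radius $r$; thus $\mathcal{R}_\delta(S_y)$ has exactly two components and $y$ is labeled an edge point. Second, for $y$ near a vertex of degree $k\neq 2$ the shell meets $k$ arms, and the crucial subcase is a branch vertex at which two edges realize the minimal angle, where the danger is that these two arms merge into one cluster and drop the count to $2$. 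This is precisely where the worst-case trigonometry enters through the effective angle $\alpha'$ of \eqref{eq::alphaPrime} and the relations \eqref{trick1}--\eqref{trick3}; the choice of $r$ in \eqref{eq::inner} is calibrated so that at radius $r$ the lateral gap between these arms exceeds $\delta$, forcing $\deg_r(y)\neq 2$ and a preliminary vertex label.

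Next I would analyze the expansion of Step 11. Points lying on an edge but within the ``ambiguous zone'' near a vertex may receive an incorrect degree count because their shells are distorted by the branching; the width $p_{11}$ of \eqref{eq::p11} is chosen so that the expanded vertex region contains every such point yet does not reach past the midpoint of an edge of length at least $b$. This yields, after Step 11, a clean gap larger than $\delta$ between the set $\mathbb{V}$ of vertex points and the set $\mathbb{E}$ of edge points along each edge. For the reconstruction phase I would then show that the Rips-$\delta$ components of $\mathbb{E}$ are in bijection with the edges of $G$ and those of $\mathbb{V}$ with the vertices: density gives connectivity within each true edge or vertex neighborhood, while the separation established above, combined with \eqref{cond1}, prevents spurious merging of distinct edges or vertices. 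Finally the adjacency rule of Step 17 links two vertex components exactly when they flank a common edge component, so $\hat G$ is isomorphic to the pseudograph of $G$.

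The main obstacle is the worst-case separation estimate at a branch vertex: one must prove simultaneously that (i) the two arms of each incident edge stay more than $\delta$ apart throughout the shell $S_y$, (ii) arms belonging to different edges do not accidentally merge, and (iii) the expansion $p_{11}$ neither undershoots the ambiguous zone nor overshoots into a neighboring vertex. These three requirements translate into a single scalar inequality after substituting \eqref{eq::inner} and \eqref{eq::p11}; collecting and simplifying the resulting trigonometric expressions, with $\alpha'$ governing how the noise $\sigma$ erodes the separation, is expected to produce exactly the bound $\delta<f(b,\alpha,\tau,\xi,\sigma)$ of \eqref{cond2}. Verifying that this one condition is sufficient for all the separations to hold at once, and that $f$ is the correct combination of the geometric parameters, is the technical heart of the argument.
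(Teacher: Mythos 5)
Your proposal is correct and follows essentially the same route as the paper's proof: reduction to the worst-case planar configuration of Section \ref{sec::worst}, condition \eqref{cond1} (with the $2\sigma$ slack) to rule out interference from parts of $G$ far in graph distance, calibration of $r$ and $p_{11}$ through the effective angle $\alpha'$ so that the two near-vertex arms stay more than $\delta$ apart in the shell and the ambiguous zone is absorbed at Step 11, and finally an arc-length budget on the shortest edge that collapses to $\delta < f(b,\alpha,\tau,\xi,\sigma)$. The one piece you defer---collecting the trigonometric estimates into the single inequality defining $f$---is exactly what the paper carries out via \eqref{trick1}--\eqref{trick3}, the bounds \eqref{su} and \eqref{sz}, and the superadditivity of $\arcsin$, so no idea is missing from your outline.
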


\begin{proof} 
We will show that under conditions \eqref{cond1} and \eqref{cond2}, Algorithm 1 can reconstruct the worst case described in section \ref{sec::worst}, formed by edges $e_1$ and $e_2$ of reach $\tau$ forming an angle of width $\alpha$. This will automatically imply that the algorithm can reconstruct the topology of other vertices and edges in the $D$-dimensional space.

Condition \eqref{cond1} guarantees that points of $G$ which are
far apart in the metric graph distance $d_G$, and close in the
embedding space, do not interfere in the construction of the shells at Steps 3-4. 

The rest of the proof involves condition
\eqref{cond2}. 
Since the sample is $\frac{\delta}{2}$-dense in the
tube, there is at least a point $y \in \mathbb{Y}$ inside the ball of
radius $\frac{\delta}{2}$ centered at any vertex $x \in G$. When using Algorithm 1 we want to be sure that $y$ is labeled as a vertex,
that is, the number of connected components of the shell around $y$ is different than 2 
(Steps 3-4). The worst case is depicted in Figure
\ref{plot2b} (left), where $x$ is the vertex of minimum angle
$\alpha$, formed by two edges, $e_1$ and $e_2$ of reach $\tau$.
First, we show that for the the value of $r$ selected in \eqref{eq::inner}, points close to an actual vertex are labeled as vertices at Steps 3-10 and points far from actual vertices are labeled as edges. 
\begin{figure}[h]
\centering
\includegraphics[scale=0.82]{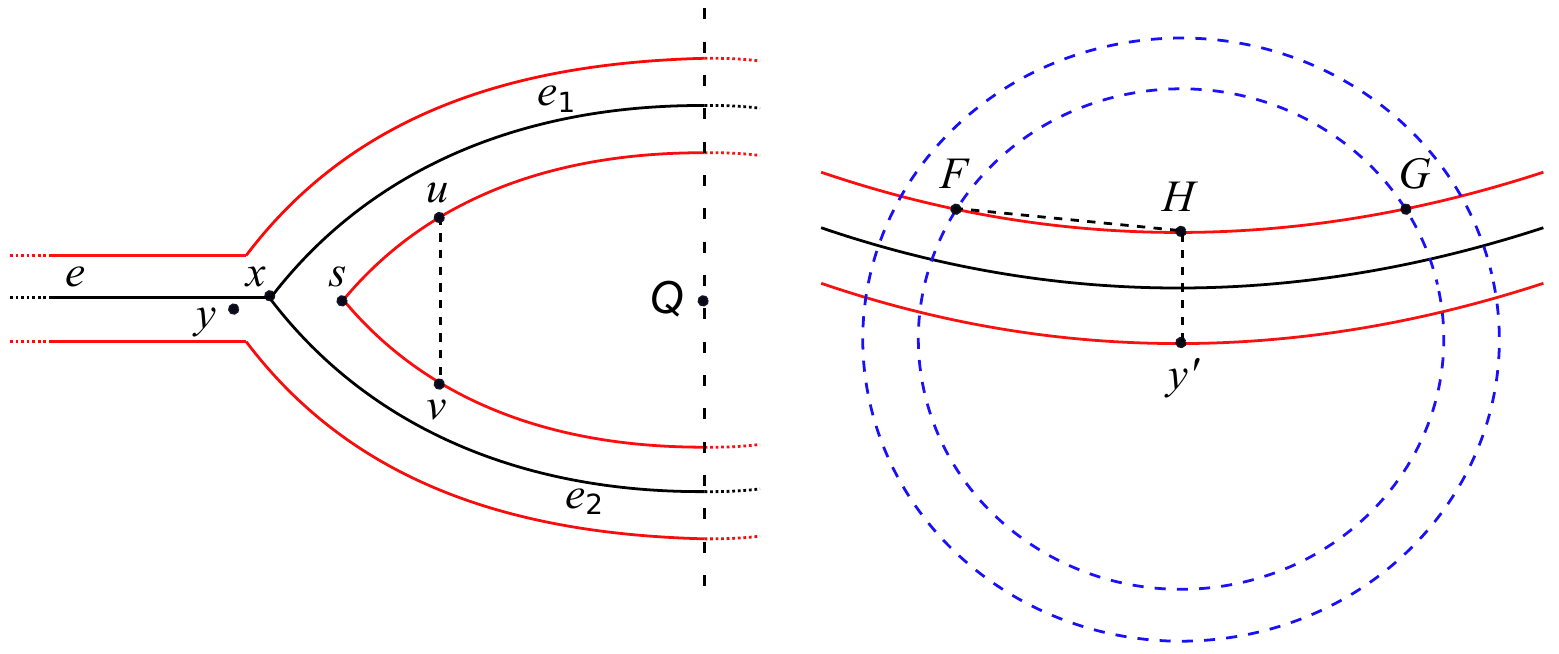}
\caption{Left: edges $e_1$ and $e_2$ with minimum 
reach $\tau$ forming the smallest angles $\alpha$ at vertex $x$. 
Right: The distance $\Vert F-G \Vert_2$ between the two connected components of the shell around an edge point $y'$ must be greater than $\delta$.}
\label{plot2b}
\end{figure}
The inner faces of the tube of radius $\sigma$ around $e_1$ and $e_2$
form an angle of width $\alpha'$ at vertex $s$, as described in Section \ref{sec::worst}.
Let $u$ and $v$ be the two points on the faces of the tube such that they are equidistant from $x$ and $\Vert u-v\Vert_2=\delta$. 
Since at Step 4 we construct a $\delta$-graph to determine the number of connected components of the 
shell $S_y$ and we want $y$ to be a vertex, we choose $r$, the inner radius of the shell $S_y$, 
so that if $u,v \in \mathbb{Y}$ then $r \geq \max\{d_\mathbb{Y}(y,u), d_\mathbb{Y}(y,v)\}$. 
This guarantees that $\forall t_1,t_2 \in \mathbb{Y}$ with $t_1$ around edge $e_1$, $t_2$ 
around edge $e_2$ such that $\{t_1,t_2 \}\subset S_y$, we have 
$d_\mathbb{Y}(t_1,t_2)\geq \delta$, that is $t_1$ and $t_2$ belong to different connected components of the shell around $y$ at Step 4.\\
The distance between $y$ and $u$ is bounded by 
$ \Vert y-x\Vert_2 +\Vert x-s \Vert_2 + \Vert s-u \Vert_2$, where, using
\eqref{trick1},
$$
\Vert x-s \Vert_2 = \Vert x-Q \Vert_2 - \Vert s-Q \Vert_2 = \tau \sin(\alpha/2) - (\tau-\sigma) \sin(\alpha'/2)
$$
and using \eqref{trick3}, 
\begin{equation}
\Vert s-u \Vert_2 \leq \frac{\delta}{2 \sin(\alpha'/4)} .
\label{su}
\end{equation}
Therefore we require that $r$, the inner radius of the shell of Step 4 satisfies
\begin{align}
r & \geq  \frac{\delta}{2} + \Vert x-s \Vert_2 +\frac{\delta}{2 \sin(\alpha'/4)} \label{eq::r1} \\
& \geq \Vert y-x\Vert_2 +\Vert x-s \Vert_2 + \Vert s-u \Vert_2. \nonumber
\end{align}
Another condition on $r$ arises when we label edge points far from actual vertices. See Figure \ref{plot2b} (right). If $y' \in \mathbb{Y}$, then it should be labeled as an edge point. That is, at Step 4, the Rips graph $\mathcal{R}_\delta(S_{y'})$ on the shell $S_{y'}$ should have 2 connected components. Therefore the distance $\Vert F-G \Vert_2$ between them must be greater than $\delta$. 
We require that
\begin{equation}
\label{eq::r2}
r \geq 2\sigma + \delta/ \sqrt{2}
\end{equation}  
which implies $\Vert F-G \Vert_2 > \delta $ when $r$ is small enough, as implied by \eqref{cond2}.\\
Note that the value $r= \frac{\delta}{2}+\sigma + \Vert x-s\Vert_2 + \frac{\delta}{2 \sin(\alpha'/4)}$ satisfies both \eqref{eq::r1} and \eqref{eq::r2}.

The outer radius of the shell at Steps 3-4 has length $r+\delta$. This guarantees that when the shell around an edge point intersects the tube around $G$ there is at least a point $y \in \mathbb{Y}$ in each connected component of the shell, since $\mathbb{Y}$ is $\frac{\delta}{2}$-dense in $G_\sigma$.

In the last part of this proof we show that condition \eqref{cond2} is needed to guarantee that the sample is dense enough and the radius of the shells of Step 3 has the correct size, so that, even in the worst case, each vertex is associated to one set of sampled points at Steps 15-17 and these connected components are correctly linked by sets of sampled points labeled as edge points.\\
Let $z\in G_\sigma$ be the point around $e_2$ where the segment of length $r+\delta$, orthogonal to the face of the tube around edge $e_1$, intersects the face of the tube around edge $e_2$. See Figure \ref{plot2c}.
If this segment does not exist we simply consider the segment of length $r+\delta$ from $s$ to a point 
$z$ on $e_2$.

\begin{figure}[h]
\centering
\includegraphics[scale=0.82]{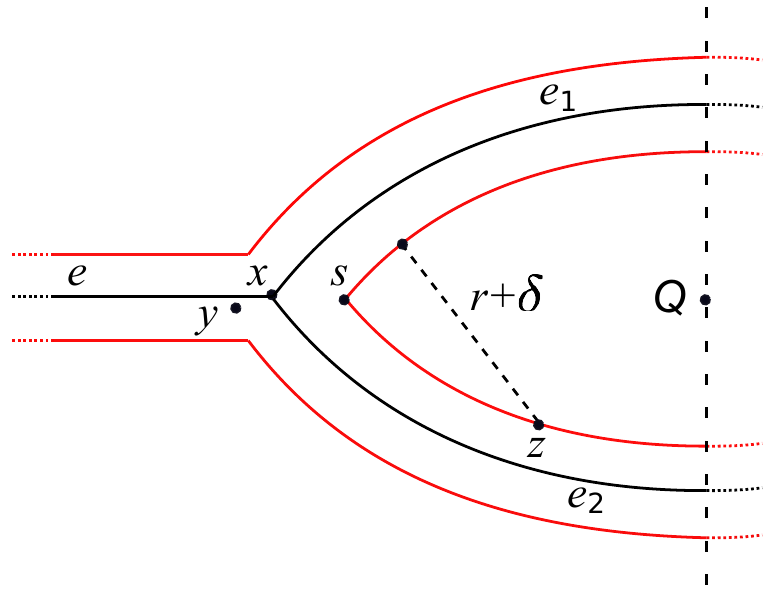}
\caption{The shell around $z$ is tangent to edge $e_2$. }
\label{plot2c}
\end{figure}

Suppose $z \in \mathbb{Y}$. Among the points that might be labeled as vertices at Step 6 because of their closeness to vertex $x$, $z$ is the furthest from $x$, since the shell around $z$ is tangent to the tube around $e_1$.
At Step 11, in order to control the labelling of the points in the tube between $y$ and $z$ we would like to label all the points in $\{y' \in \mathbb{Y}: \Vert y'-y \Vert_2 \leq  \Vert y-z \Vert_2\}$ as vertices.
To simplify the calculation we use the following bound
\begin{align*}
\Vert y-z \Vert_2 &\leq \Vert y-x \Vert_2 + \Vert x-s \Vert_2 + \Vert s-z \Vert_2 , 
\end{align*}
where, using \eqref{trick3},
\begin{equation}
\Vert s-z\Vert_2 \leq \frac{r+\delta}{\sin(\alpha'/2)} .
\label{sz}
\end{equation}
This justifies the choice of $\displaystyle p_{11}= \frac{\delta}{2} + \Vert x-s\Vert_2+ \frac{r+\delta}{\sin(\alpha'/2)} \geq \Vert y-z\Vert_2  $. Thus, at Step 11 we label all the points in $\{y' \in \mathbb{Y}: \Vert y'-y \Vert_2 \leq  p_{11} \text{ and } y \text{ is labeled as vertex at Step 6 }\}$ as vertices.
If $z$ is actually labeled as a vertex at Step 6, then through the expansion of Step 11, all the points at distance not greater than $p_{11}$ from $z$ are labeled as vertices.\\
Finally we determine under which conditions there is at least a point in the tube around $e_2$ labeled as an edge point after Step 11. Consider the worst case in which $e_1$ and $e_2$ are forming an angle of size $\alpha$ at both their extremes $x$ and $x'$. See Figure \ref{plot3b}.

\begin{figure}[h]
\centering
\includegraphics[scale=0.82]{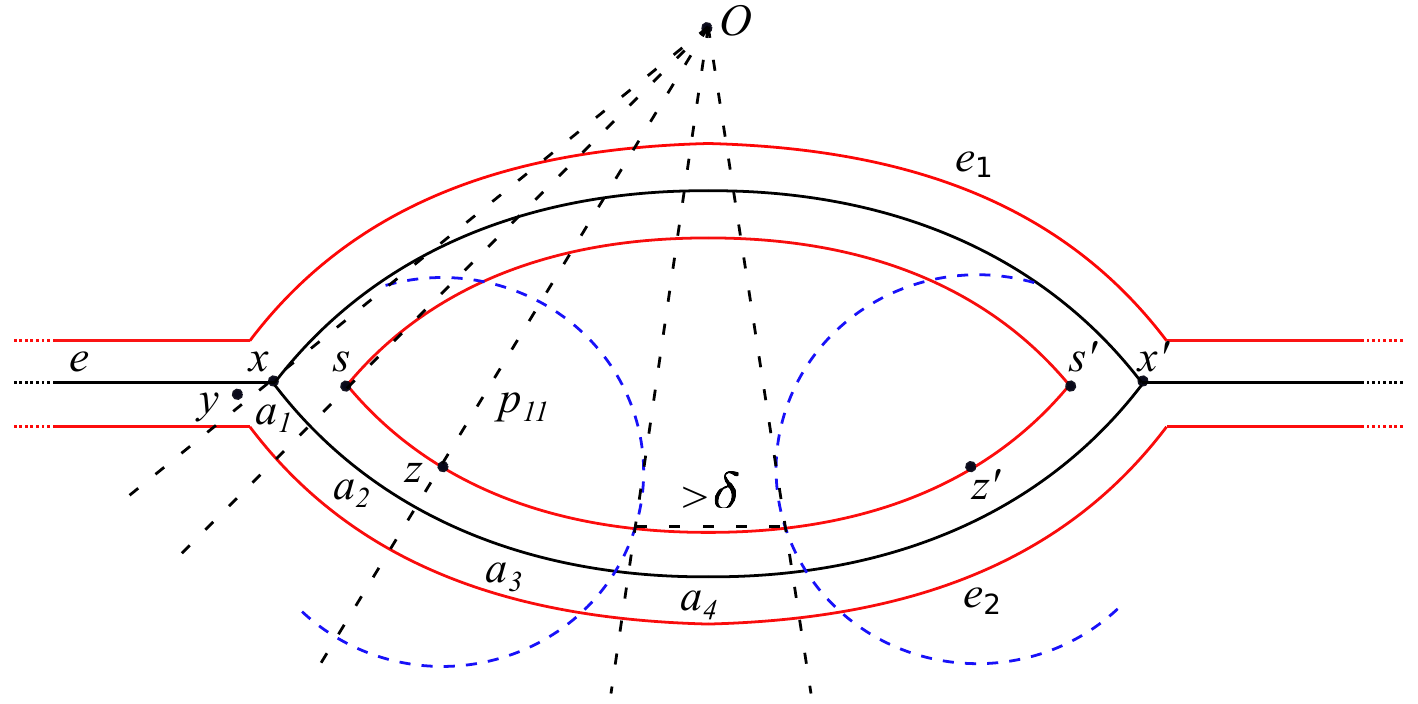}
\caption{Edges $e_1$ and $e_2$, forming an angle of size $\alpha$ at both their extremes $x$ and $x'$.}
\label{plot3b}
\end{figure}

\noindent All the points $y' \in \mathbb{Y}$ such that $\Vert y'-z \Vert_2  \leq p_{11}$ or $\Vert y'-z' \Vert_2  \leq p_{11}$ might be labeled as vertices. 
When we construct $\mathcal{R}(\mathbb{E})_{\delta}$ and $\mathcal{R}(\mathbb{V})_{\delta}$ at Step 15 the two sets of vertices around $x$ and $x'$ must be disconnected and there must be at least an edge point between them. A sufficient condition is that the length of edge $e_2$ is greater than $2(a_1+a_2+a_3)+a_4$, where
\begin{itemize}
\item $a_1$ is the length of the arc of $e_2$ formed by the projections of lines $\overline{Ox}$ and $\overline{Os}$ on $e_2$,
\item $a_2$ is the length of the arc of $e_2$ formed by the projection of the chord of length $\Vert s-z \Vert_2$,
\item $a_3$ is the length of the arc of $e_2$ formed by the projection of the chord of length $p_{11}$,
\item $a_4$ is the length of the arc of $e_2$ formed by the projection of the chord of length $\delta$.
\end{itemize}
Note that, in Figure \ref{plot3b}, $e_2 = 2 \tau \arcsin\left(\frac{\Vert x-x'\Vert_2}{2 \tau} \right)=\alpha\tau$ but in general it might be shorter, so that $e_1$ and $e_2$ might not intersect in $x'$. However, by assumptions A2, $e_2$ must be longer than $b$. Thus we require
\begin{equation}
\min(b, \alpha\tau) > 2(a_1+a_2+a_3)+a_4.
\label{condition}
\end{equation}
By simple properties involving arcs and chords we have
\begin{align*}
a_1&= \left(\frac{\alpha-\alpha'}{2} \right) \tau, \hspace{2cm}
a_2= 2\tau \arcsin\left( \frac{\Vert s-z \Vert_2}{2(\tau-\sigma)} \right),\\
a_3&= 2\tau \arcsin\left( \frac{p_{11}}{2(\tau-\sigma)} \right), \quad
a_4= 2\tau \arcsin\left( \frac{\delta}{2(\tau-\sigma)} \right).
\end{align*}

\noindent Since the arcsin is superadditive in $[0,1]$ we require the stronger condition
$$
\min(b, \alpha\tau) - (\alpha-\alpha')\tau > 2\tau \arcsin\left(  \frac{2 \Vert s-z\Vert_2 +2 p_{11} + \delta}{2(\tau-\sigma)} \right),
$$
which holds if
$$
\sin\left(\frac{\min(b, \alpha\tau) - (\alpha-\alpha')\tau}{2\tau}\right) > \frac{2 \frac{r+\delta}{\sin(\alpha'/2)}  +2 p_{11} + \delta}{2(\tau-\sigma)}.
$$
The last condition is equivalent to \eqref{cond2}.
If this condition is satisfied then the graph is correctly reconstructed at Steps 15-17: every connected component of $\mathcal{R}_\delta(\mathbb{V})$ corresponds to a vertex of $G$ and every connected component of $\mathcal{R}_\delta(\mathbb{E})$ corresponds to an edge of $G$. 
\end{proof}

\begin{example}
\textbf{A Neuron in Three-Dimensions.} 
We return to the neuron example and we try to apply Propositions \ref{propAlg} to the 3D data of 
Figure \ref{plot3D}, namely the neuron cr22e from the
hippocampus of a rat \citep{gulyas1999total}. The data were obtained
from NeuroMorpho.Org \citep{ascoli2007neuromorpho}.  The total length
of the graph is $1750.86  \mu m$. We assume the smallest edge has
length $100 \mu m$, the smallest angle $\pi/3$, the local reach $30 \mu m$ and $\xi=50 \mu m$.  The conditions of Proposition \ref{propAlg}
are satisfied for $\delta=2.00 \mu m$. Algorithm \ref{alg::MGR} reconstructs the topology of the metric graph starting from a $\delta/2$-dense sample.
Figure \ref{plot3D}b shows the reconstructed
graph.  
\end{example}

\section{Minimax Analysis}
\label{section::minimax}

In this section we derive lower and upper bound for the minimax risk
\begin{equation}
R_n= \inf_{\hat G} \sup_{P \in \mathcal{P}} P^n \Big(\hat G \not\simeq G  \Big),
\end{equation}
where, as described in Section \ref{section::background}, the infimum is over all estimators $\hat G$ of the metric graph $G$, the supremum is over the class of distributions $\mathcal{P}$ for 
$\mathbb{Y}$ and  $\hat G \not\simeq G $ means that $\hat G$ and $G$ are not isomorphic. 

\subsection{Lower Bounds}
To derive a lower bound on the minimax risk, we make repeated use of Le Cam's lemma. See, e.g.,
\cite{yu1997assouad} and Chapter 2 of \citet{tsybakov2008introduction}.  Recall that the total variation distance
between two measures $P$ and $Q$ on the same probability space is defined by TV$(P,Q)=\sup_A|P(A)-Q(A) |$ where the supremum is over all measurable
sets. It can be shown that TV$(P,Q)=P(H)-Q(H)$, where $H=\{y:p(y)\geq q(y) \}$ and $p$ and $q$ are the densities of $P$ and $Q$ with respect to any measure that dominates both $P$ and $Q$.

\begin{lemma}[\textbf{Le Cam}]
Let $\mathcal{Q}$ be a set of distributions. Let $\theta(Q)$ take
values in a metric space with metric $\rho$. Let $Q_1,Q_2 \in
\mathcal{Q}$ be any pair of distributions in $\mathcal{Q}$. Let
$Y_1,\dots,Y_n$ be drawn iid from some $Q \in \mathcal{Q}$ and
denote the corresponding product measure by $Q^n$. Then
\begin{equation}
\inf_{\hat\theta} \sup_{Q \in \mathcal{Q}}
\mathbb{E}_{Q^n}\left[\rho(\hat \theta, \theta(Q)) \right] \geq
\frac{1}{8} \rho(\theta(Q_1), \theta(Q_2)) (1-
\text{TV}(Q_1,Q_2))^{2n}
\end{equation}
where the infimum is over all the estimators of $\theta(Q)$.
\end{lemma}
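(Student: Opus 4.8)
The plan is to use the classical two-point (Le Cam) reduction from estimation to testing. Write $\theta_1 = \theta(Q_1)$, $\theta_2 = \theta(Q_2)$ and $\Delta = \rho(\theta_1,\theta_2)$. First I would bound the supremum from below by the average over the two candidate distributions,
\[
\sup_{Q \in \mathcal{Q}} \mathbb{E}_{Q^n}\left[\rho(\hat\theta,\theta(Q))\right] \;\geq\; \frac{1}{2}\Big( \mathbb{E}_{Q_1^n}[\rho(\hat\theta,\theta_1)] + \mathbb{E}_{Q_2^n}[\rho(\hat\theta,\theta_2)] \Big),
\]
and then apply Markov's inequality to each term, $\mathbb{E}_{Q_i^n}[\rho(\hat\theta,\theta_i)] \geq \tfrac{\Delta}{2}\, Q_i^n(\rho(\hat\theta,\theta_i) \geq \Delta/2)$, which converts the expected risk into the probability of a large error and reduces the problem to distinguishing $Q_1$ from $Q_2$.

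Next I would invoke the triangle inequality: if $\rho(\hat\theta,\theta_1) < \Delta/2$, then $\Delta \leq \rho(\hat\theta,\theta_1) + \rho(\hat\theta,\theta_2)$ forces $\rho(\hat\theta,\theta_2) > \Delta/2$, so the two events $\{\rho(\hat\theta,\theta_1) \geq \Delta/2\}$ and $\{\rho(\hat\theta,\theta_2) \geq \Delta/2\}$ together cover the whole space. Setting $A = \{\rho(\hat\theta,\theta_1) \geq \Delta/2\}$, this gives $A^c \subseteq \{\rho(\hat\theta,\theta_2) \geq \Delta/2\}$, so that
\[
Q_1^n(\rho(\hat\theta,\theta_1)\geq \Delta/2) + Q_2^n(\rho(\hat\theta,\theta_2)\geq \Delta/2) \;\geq\; Q_1^n(A) + Q_2^n(A^c) \;\geq\; 1 - \text{TV}(Q_1^n,Q_2^n),
\]
using the characterization of total variation as $\sup_A |Q_1^n(A) - Q_2^n(A)|$. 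Collecting constants yields the intermediate bound $\tfrac{\Delta}{4}\big(1 - \text{TV}(Q_1^n,Q_2^n)\big)$.

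The main obstacle is the final step: converting $\text{TV}(Q_1^n,Q_2^n)$ for the product measures into the stated $(1-\text{TV}(Q_1,Q_2))^{2n}$, since total variation does not tensorize multiplicatively. I would route through the Hellinger affinity $\rho_H(P,Q) = \int \sqrt{pq}$, which does factorize, $\rho_H(Q_1^n,Q_2^n) = \rho_H(Q_1,Q_2)^n$. Two elementary inequalities close the gap. First, by Cauchy--Schwarz applied to $\sqrt{\min(p,q)}\cdot\sqrt{\max(p,q)}$ together with $\int \max(p,q) = 2 - \int\min(p,q) \leq 2$, one gets $\rho_H^2 \leq 2\int\min(p,q) = 2\big(1-\text{TV}\big)$, i.e. $1 - \text{TV} \geq \tfrac{1}{2}\rho_H^2$. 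Second, since $\sqrt{pq} \geq \min(p,q)$ pointwise, $\rho_H \geq 1 - \text{TV}$. Chaining these and the factorization gives
\[
1 - \text{TV}(Q_1^n,Q_2^n) \;\geq\; \tfrac{1}{2}\,\rho_H(Q_1^n,Q_2^n)^2 \;=\; \tfrac{1}{2}\,\rho_H(Q_1,Q_2)^{2n} \;\geq\; \tfrac{1}{2}\big(1-\text{TV}(Q_1,Q_2)\big)^{2n}.
\]
Substituting this into $\tfrac{\Delta}{4}\big(1-\text{TV}(Q_1^n,Q_2^n)\big)$ produces $\tfrac{\Delta}{8}\big(1-\text{TV}(Q_1,Q_2)\big)^{2n}$, and since $\hat\theta$ was arbitrary, taking the infimum over all estimators establishes the claim.
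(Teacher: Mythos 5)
Your proof is correct: the two-point averaging, the Markov-plus-triangle-inequality reduction to testing, and the passage through the Hellinger affinity $\int\sqrt{pq}$ (which tensorizes and satisfies $\tfrac{1}{2}\rho_H^2 \leq 1-\mathrm{TV} \leq \rho_H$) all check out and yield exactly the stated constant $\tfrac{1}{8}$ and exponent $2n$. The paper does not prove this lemma itself but cites \citet{yu1997assouad} and \citet{tsybakov2008introduction}, and your argument is precisely the classical derivation found in those references, so it matches the intended proof.
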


Below we apply Le Cam's lemma using
several pairs of distributions. Any pair $Q_1,Q_2$ is associated
with a pair of metric graphs $G',G'' \in \mathcal{G}$. We take
$\theta(Q_1)$ and $\theta(Q_2)$ to be the classes of graphs that are
isomorphic to $G'$ and $G''$. We set $\rho(\theta(Q_1),
\theta(Q_2))=0$ if $G'$ and $G''$ are isomorphic and
$\rho(\theta(Q_1), \theta(Q_2))=1$ otherwise.
Figure \ref{lower} shows several pairs of metric graphs that are used
to derive lower bounds in the 
noiseless case and in the tubular noise case. 
In the noiseless case we ignore the $\sigma$-tubes around the metric graphs.

\begin{figure}[h]
\centering
\includegraphics[scale=0.85]{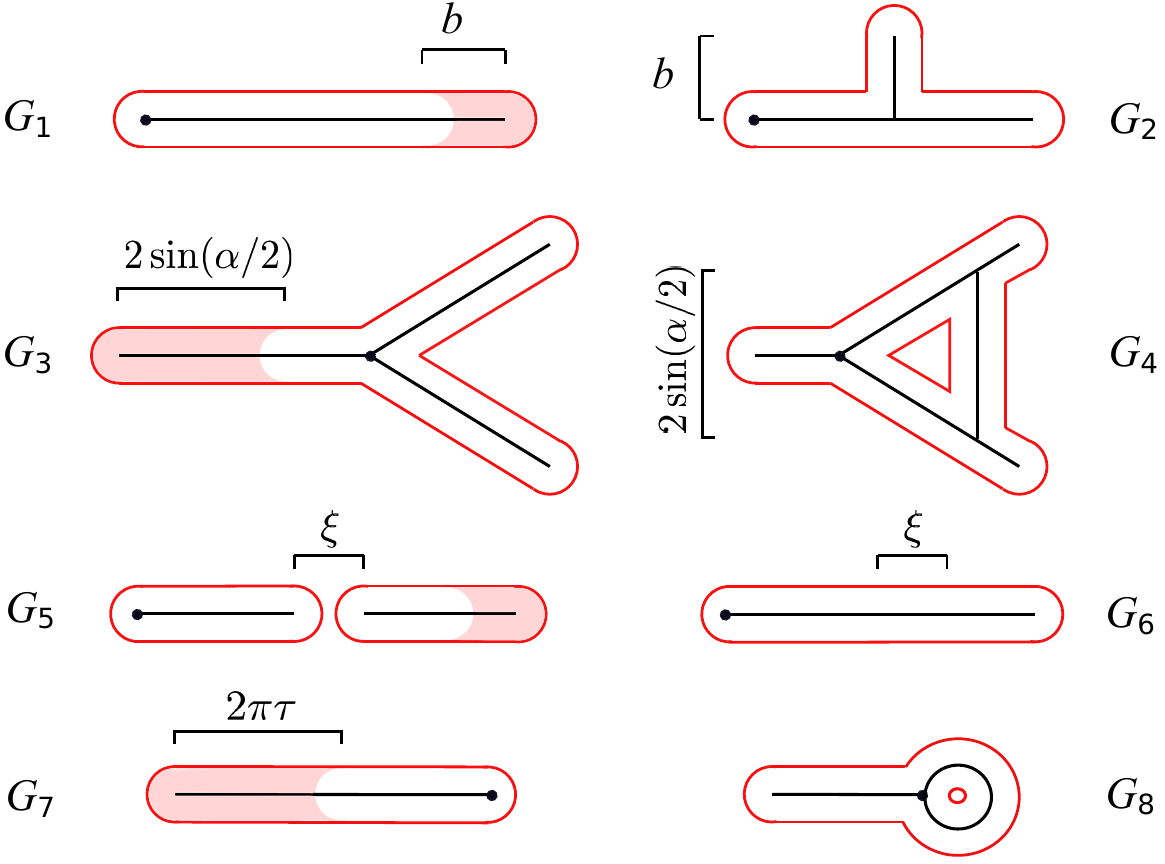}
\caption{Pairs of metric graphs used in the derivation of lower bounds in the noiseless case and in the tubular noise case.}
\label{lower}
\end{figure}

\begin{theorem}
\label{th::lowerNoiseless}
In the noiseless case ($\sigma=0$), for $b\leq b_0(a)$, 
$\alpha\leq \alpha_0(a)$, $\xi\leq \xi_0(a)$, $\tau\leq \tau_0(a)$, where $b_0(a), \alpha_0(a), \xi_0(a)$ and $\tau_0(a)$ 
are constants which depend on $a$, a lower bound on the minimax risk for metric graph reconstruction is
\begin{equation}
R_n \geq 
\exp\Bigl( -2a \min\{b, 2 \sin(\alpha/2), \xi, 2\pi \tau \} n \Bigr).
\end{equation}
\end{theorem}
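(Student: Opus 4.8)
The plan is to invoke Le Cam's lemma (as stated above) with the discrete metric $\rho$, taking $\theta(Q)$ to be the isomorphism class of the underlying graph, so that $\rho(\theta(Q_1),\theta(Q_2))=1$ exactly when $G'$ and $G''$ are non-isomorphic. Since $\rho$ takes values in $\{0,1\}$, the expected loss $\mathbb{E}_{Q^n}[\rho(\hat\theta,\theta(Q))]$ is precisely the probability of misidentifying the isomorphism class. Restricting the supremum in $R_n$ to a two-point subfamily $\{Q_1,Q_2\}\subset\mathcal{P}$ and choosing $G',G''$ non-isomorphic, Le Cam's lemma then reads
\begin{equation*}
R_n \;\ge\; \tfrac{1}{8}\,\bigl(1 - \text{TV}(Q_1,Q_2)\bigr)^{2n}.
\end{equation*}
Everything reduces to exhibiting, for each of the four quantities $b$, $2\sin(\alpha/2)$, $\xi$ and $2\pi\tau$, a pair of non-isomorphic graphs in $\mathcal{G}$ whose induced distributions are as close as possible in total variation.

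First I would build the four pairs indicated in Figure \ref{lower}. In each case $G'$ and $G''$ agree on a large common region and differ only on a small ambiguous feature whose total length is governed by the corresponding parameter: a missing versus extra short edge of length $b$; two edges meeting at a vertex whose endpoint separation is the chord $2\sin(\alpha/2)$; two strands approaching to the critical Euclidean distance $\xi$; and a minimal loop, whose circumference is $2\pi\tau$ once its radius is forced down to the reach $\tau$. The delicate point is to ensure both members of each pair genuinely lie in $\mathcal{G}$, i.e.\ simultaneously respect A1--A4 with the \emph{same} values of $b,\alpha,\tau,\xi$; this is exactly why the theorem assumes $b\le b_0(a)$, $\alpha\le\alpha_0(a)$, $\xi\le\xi_0(a)$, $\tau\le\tau_0(a)$, so that the competing feature can be inserted without violating the lower bound $a$ on the density or the geometric regularity conditions.

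Next I would bound the total variation of each pair. Assigning to both distributions a common density on the shared region and the minimal admissible density $a$ on the differing feature, the two measures disagree only on that feature, so $\text{TV}(Q_1,Q_2)$ is of order $a$ times the length of the feature, i.e.\ of order $\tfrac{a}{2}\,q$ with $q$ the associated quantity. Feeding this into the displayed bound and using the elementary inequality $1-x\ge e^{-2x}$ (valid once the parameter restrictions force $\text{TV}\le\tfrac12$) yields
\begin{equation*}
\bigl(1-\text{TV}(Q_1,Q_2)\bigr)^{2n}\;\ge\;\exp\bigl(-2a\,q\,n\bigr)
\end{equation*}
for each pair. Since all four resulting bounds hold simultaneously, the strongest one corresponds to the \emph{smallest} $q$, which produces the claimed exponent $\min\{b,2\sin(\alpha/2),\xi,2\pi\tau\}$; the residual constant from Le Cam's $\tfrac18$ is absorbed by the same smallness restrictions on the parameters.

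I expect the main obstacle to be the geometric bookkeeping of the second step rather than the probabilistic estimate: one must verify, pair by pair, that both competing graphs satisfy A1--A4 with identical parameters while differing in topology, and that the length of the ambiguous feature is exactly the advertised quantity, so that the constant in the exponent comes out as $2a$ uniformly across the four constructions. The loop construction (controlling a configuration of reach $\tau$ through its circumference $2\pi\tau$) and the angle construction (relating the near-coincidence of two edges to the chord $2\sin(\alpha/2)$) are the two that require the most care.
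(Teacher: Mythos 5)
Your proposal follows essentially the same route as the paper: a two-point Le Cam argument with the discrete loss, instantiated on exactly the four pairs of graphs in Figure \ref{lower} (extra short edge of length $b$; triangle closed by the chord $2\sin(\alpha/2)$; two strands at Euclidean distance $\xi$; loop of circumference $2\pi\tau$ versus a straight edge), with the minimal admissible density $a$ placed on the distinguishing feature and the remaining mass matched on the common region, then taking the best of the four bounds via the minimum. The one quantitative slip is your total variation constant: for these constructions $\mathrm{TV}$ equals $a\,q$, not $\tfrac{a}{2}q$. Each graph carries mass at least $a$ times the feature length on its private part, so for instance $\mathrm{TV}(p_1,p_2)=\tfrac12(ab+ab)=ab$, and $a q$ is in fact a floor that no admissible density can beat, since the private edge has length at least the relevant parameter and density at least $a$. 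Consequently your (correct) inequality $1-x\ge e^{-2x}$ would yield exponent $-4aqn$ rather than the stated $-2aqn$; the paper reaches $-2aqn$ by instead writing $(1-aq)^{2n}\ge e^{-2aqn}$, a step that is loose in the opposite direction (since $1-x\le e^{-x}$ for $x>0$). So your argument and the paper's coincide conceptually, and both establish the theorem only up to a constant factor in the exponent; neither, as literally written, pins down the factor $2a$. Everything else --- verifying that both members of each pair lie in $\mathcal{G}$ under the smallness conditions $b\le b_0(a)$, $\alpha\le\alpha_0(a)$, $\xi\le\xi_0(a)$, $\tau\le\tau_0(a)$, and absorbing Le Cam's $\tfrac18$ --- matches the paper's treatment.
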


\begin{proof}  We consider the 4 parameters separately. See Figure \ref{lower}, ignoring the red lines representing the tubular noise that is not considered in this theorem.

{\it Shortest edge $b$}. Consider the metric graph $G_1$ consisting of a single edge of length 1+$b$ and metric graph $G_2$ with an edge of length 1 and an orthogonal edge of length $b$ glued in the middle.
The density on $G_1$ is constructed in the following way: on the set $G_1 \backslash G_2$ of 
length $b$ we set $p_1(x)=a$ and the rest of the mass is evenly distributed over the remaining portion of $G_1$. 
Similarly, for $G_2$ we set $p_2(x)=a$ on $ G_2 \backslash G_1$, which correspond to the orthogonal edge of length $b$. 
We evenly spread the remaining mass. 
The two densities differ only on the sets $G_1 \backslash G_2$ and $G_2 \backslash G_1$. Therefore
$
\text{TV}(p_1,p_2) \leq a b
$ 
and, by Le Cam's lemma, 
$
R_n \geq \frac{1}{8} (1- ab)^{2n} \geq \frac{1}{8} \text{e}^{-2abn}
$ 
for all $b\leq b_0(a)$, where $b_0(a)$ is a constant depending on $a$.

 {\it Smallest angle $\alpha$}. Now consider the metric graphs $G_3$ and $G_4$. 
$G_3$ consists of two edges of length 2 forming an angle $\alpha$ and a 
third edge of length $1+2 \sin(\alpha/2)$ glued to the first two. 
$G_4$ is similar: an edge of length $2\sin(\alpha/2)$ is added to complete the triangle, while the edge on the left has length 1. As in the previous case we set $p_3(x)=a$ on $G_3\backslash G_4$,  $p_4(x)=a$ on  $G_4\backslash G_3$ and spread evenly the rest of the mass.
The total variation distance is 
$
\text{TV}(p_3,p_4)\leq 2a \sin\left(\frac{\alpha}{2}\right) 
$ 
and, by Le Cam's lemma, 
$
R_n \geq \frac{1}{8} (1- 2a \sin\left(\alpha/2 \right) )^{2n} \geq \frac{1}{8} \text{e}^{-4a\sin\left(\alpha/2 \right)n}
$ 
for all $\alpha\leq \alpha_0(a)$, where $\alpha_0(a)$ is a constant depending on $a$.

 {\it Global reach $\xi$}. We defined the global reach as the shortest euclidean distance between two 
points that are far apart in the graph distance.
Figure \ref{lower} shows metric graph $G_5$ formed by a single edge of length 1 and 
metric graph $G_6$ consisting of two edges of length $0.5$, $\xi$ apart from each other.
Again, we set $p_5(x)=a$ on $G_5 \backslash G_6$, $p_6(x)=a$ on $G_6 \backslash G_5$ and evenly spread the rest.
We obtain
$
\text{TV}(p_5,p_6)\leq a\xi
$ 
and, by Le Cam's lemma, 
$
R_n \geq \frac{1}{8} (1- a\xi)^{2n} \geq \frac{1}{8} \text{e}^{-2a\xi n}
$ 
for all $\xi\leq \xi_0(a)$, where $\xi_0(a)$ is a constant depending on $a$.
 
{\it Local reach $\tau$}. The local reach $\tau$ is the smallest reach of the edges forming the metric graph.
Consider metric graphs $G_7$ and $G_8$. $G_7$ consists of a loop of radius $\tau$ attached to an edge of length 1 and metric graph $G_8$ is a single edge of length $1+2 \pi \tau$.
As in the previous cases $p_7(x)=a$ on $G_7 \backslash G_8$ and $p_8(x)=a$ on $G_8 \backslash G_7$. 
It follows that 
$
\text{TV}(p_7,p_8)\leq 2a \pi \tau
$ 
and, by Le Cam's lemma, 
$
R_n \geq \frac{1}{8} (1- 2a \pi \tau)^{2n} \geq \frac{1}{8} \text{e}^{-4a \pi \tau n}
$ 
for all $\tau\leq \tau_0(a)$, where $\tau_0(a)$ is a constant depending on $a$. 
\end{proof}

For the tubular noise case we assume that $\sigma$ is small enough to guarantee that $R_n<1$, that is, the problem is not hopeless. In particular, we require that $\sigma$ satisfies conditions \eqref{cond1} and \eqref{cond2} of Proposition \ref{propAlg}, which can be combined into the following condition

\begin{equation}
0 < \min\left\{ \frac{\xi-3\sigma- \tau \sin(\alpha/2) + (\tau-\sigma) \sin(\alpha'/2)}{3/2 +[2 \sin(\alpha'/4)]^{-1}}, \,  f(b,\alpha,\tau,\xi,\sigma)  \right\}.
\label{cond:tube}
\end{equation}

\begin{theorem}
\label{th::lowerNoise}
Assume that $\sigma$ is positive and satisfies condition \eqref{cond:tube}.
In the tubular noise case, for $b\leq b_0(D)$, 
$\alpha\leq \alpha_0(D)$, $\xi\leq \xi_0(D)$, $\tau\leq \tau_0(D)$, where $b_0(D), \alpha_0(D), \xi_0(D)$ and $\tau_0(D)$ 
are constants which depend on the ambient dimension $D$, a lower bound on the minimax risk for metric graph reconstruction is
\begin{equation}
R_n \geq  \frac{1}{8} 
\exp\Bigl( -2 \min\{ C_{D,1}b, C_{D,2}\sin(\alpha/2), C_{D,3} \xi, C_{D,4} \tau    \} n \Bigr),
\end{equation}
for some constants $C_{D,1},C_{D,2},C_{D,3},C_{D,4}$.
\end{theorem}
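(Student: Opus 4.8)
The plan is to reuse the four pairs of metric graphs constructed in the proof of Theorem~\ref{th::lowerNoiseless} and displayed in Figure~\ref{lower} (the pairs $(G_1,G_2)$ through $(G_7,G_8)$, each isolating one of $b$, $\alpha$, $\xi$, $\tau$), but now to equip each graph $G$ with its tubular-noise distribution $P_{G,\sigma}$, i.e.\ the uniform law on the $\sigma$-tube $G_\sigma\subset\mathbb R^D$. As in the isomorphism framing preceding the theorem, we set $\rho(\theta(Q'),\theta(Q''))=1$ for each non-isomorphic pair, apply Le Cam's lemma separately to the four pairs, and then keep the largest of the four lower bounds; this is what turns the four per-pair exponents into a single $\min$ inside the exponential.

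The heart of the argument is to bound $\mathrm{TV}(P_{G',\sigma},P_{G'',\sigma})$ for each pair. Writing $\omega_{D-1}$ for the volume of the unit $(D-1)$-ball and $|\cdot|$ for Lebesgue measure in $\mathbb R^D$, both laws are uniform, so
$$\mathrm{TV}(P_{G',\sigma},P_{G'',\sigma}) = 1 - \frac{|G'_\sigma\cap G''_\sigma|}{\max\{|G'_\sigma|,|G''_\sigma|\}},$$
and it suffices to estimate two volumes. For the numerator I would use the elementary containment $G'_\sigma\setminus G''_\sigma\subseteq (G'\setminus G'')_\sigma$ (any point within $\sigma$ of $G'$ but not of $G''$ must be within $\sigma$ of the part of $G'$ absent from $G''$), so that the volume dropped from the intersection is at most the volume of the $\sigma$-tube of the portion on which the two graphs differ. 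By the tube volume formula, valid for $\sigma<\tau$ thanks to assumption A2, this portion contributes at most $\ell_{\mathrm{diff}}\,\omega_{D-1}\sigma^{D-1}$ up to curvature corrections controlled by $\tau$, where $\ell_{\mathrm{diff}}$ is proportional to the relevant parameter ($b$, $2\sin(\alpha/2)$, $\xi$, or $2\pi\tau$, exactly the differing lengths computed in the noiseless proof). The denominator is bounded below by the same factor $\omega_{D-1}\sigma^{D-1}$ times the order-one total length of the graph. The common factor $\omega_{D-1}\sigma^{D-1}$ cancels, leaving $\mathrm{TV}(P_{G',\sigma},P_{G'',\sigma})\leq C_{D,i}\,\mathrm{param}_i$ for constants $C_{D,i}$ depending on $D$ alone.

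With these bounds, Le Cam's lemma gives for each pair $R_n\geq\frac18\bigl(1-C_{D,i}\,\mathrm{param}_i\bigr)^{2n}$. Using $\log(1-t)\geq -2t$ for $0\leq t\leq 1/2$---which holds once each $\mathrm{param}_i$ is below the threshold guaranteed by the hypotheses $b\leq b_0(D)$, $\alpha\leq\alpha_0(D)$, $\xi\leq\xi_0(D)$, $\tau\leq\tau_0(D)$ and by condition~\eqref{cond:tube} on $\sigma$---this is at least $\frac18\exp(-2C_{D,i}\,\mathrm{param}_i\,n)$ after folding the factor from the logarithm bound into $C_{D,i}$. Taking the maximum over the four pairs replaces $C_{D,i}\,\mathrm{param}_i$ by $\min_i C_{D,i}\,\mathrm{param}_i$ in the exponent, yielding the stated bound.

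The main obstacle will be the volume accounting near vertices and endpoints. Unlike the one-dimensional length bookkeeping of the noiseless case, the tubes of edges meeting at a vertex overlap in a region whose size depends on both $D$ and the angle (most delicately in the pair $(G_3,G_4)$ that tests $\alpha$, and at the T-junction of $(G_1,G_2)$), so the clean splitting into ``shared'' and ``differing'' tube volumes only holds up to these junction corrections. The reach hypothesis A2 together with the smallness of $\sigma$ from \eqref{cond:tube} is exactly what keeps these corrections bounded and the tube formula applicable for $\sigma<\tau$; the remaining care is to isolate the $\omega_{D-1}\sigma^{D-1}$ scaling in numerator and denominator so that it cancels and leaves a constant depending on the ambient dimension alone.
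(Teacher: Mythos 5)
Your proposal is correct and takes essentially the same route as the paper: the same four pairs of graphs from Figure \ref{lower} equipped with uniform laws on their $\sigma$-tubes, a total-variation bound given by the ratio of the differing-tube volume ($\lesssim \mathrm{param}_i\,\sigma^{D-1}$) to the full tube volume ($\gtrsim \sigma^{D-1}$) so that the $\sigma^{D-1}$ factors cancel and leave a constant depending only on $D$, followed by Le Cam's lemma for each pair and taking the best of the four bounds to produce the $\min$ in the exponent. Your identity $\mathrm{TV}(P_{G',\sigma},P_{G'',\sigma})=1-|G'_\sigma\cap G''_\sigma|/\max\{|G'_\sigma|,|G''_\sigma|\}$ is exactly the paper's computation $\mathrm{TV}=q'(H)$ with $H$ the part of the larger tube on which the other density vanishes, so the two arguments coincide.
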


\begin{proof}  As in the proof oh Theorem \ref{th::lowerNoiseless} we consider the 4 parameters separately. We compare the pairs of graphs shown in Figure \ref{lower}, including the tubular regions constructed around them, from which we get samples uniformly.

{\it Shortest edge $b$}. Consider the metric graph $G_1$ consisting of a single edge of length 1+$b$ 
and metric graph $G_2$ with an edge of length 1 and an orthogonal edge of length $b$ glued in the middle. 
Since $\text{vol}(G_1)>\text{vol}(G_2)$, the density $q_1$ at a point in the tube around $G_1$ is lower than the density $q_2$ at a point around $G_2$.
From the definition of total variation $TV=q_1(H)-q_2(H)$ where $H$ is the set where $q_1>q_2$, the shaded area in Figure \ref{lower}. Note that $q_2(H)=0$ and
$$
TV(q_1,q_2)=q_1(H)=\frac{\text{vol}(H)}{\text{vol}(G_1)} \leq C_{D,1} \frac{b \sigma^{D-1}}{(1+b)\sigma^{D-1}}\leq C_{D,1} b.
$$
By Le Cam's lemma, 
$
R_n \geq \frac{1}{8} (1- C_{D,1} b)^{2n} \geq \frac{1}{8} \text{e}^{-2 C_{D,1} bn}
$ 
for all $b\leq b_0(D)$, where $b_0(D)$ is a constant depending on $D$.

 {\it Smallest angle $\alpha$}. Now consider the metric graphs $G_3$ and $G_4$. 
Since $\text{vol}(G_3)>\text{vol}(G_4)$, the density $q_3$ at a point in the tube around $G_3$ is lower than the density $q_4$ at a point around $G_4$.
$TV=q_3(H)-q_4(H)$ where $H$ is the set where $q_3>q_4$, the shaded area in the tube around $G_3$. Note that $q_4(H)=0$ and
$$
TV(q_3,q_4)=q_3(H)=\frac{\text{vol}(H)}{\text{vol}(G_3)} \leq C_{D,2} \frac{\sin(\alpha/2) \sigma^{D-1}}{(1+\sin(\alpha/2))\sigma^{D-1}}\leq C_{D,2} \sin(\alpha/2).
$$
By Le Cam's lemma, 
$
R_n \geq \frac{1}{8} (1- C_{D,2} \sin(\alpha/2))^{2n} \geq \frac{1}{8} \text{e}^{-2 C_{D,2} \sin(\alpha/2) n}
$ 
for all $\alpha \leq \alpha_0(D)$, where $\alpha_0(D)$ is a constant depending on $D$.

 {\it Global reach $\xi$}. Figure \ref{lower} shows metric graph $G_5$ formed by a single edge of length 1 and 
metric graph $G_6$ consisting of two edges of length $0.5$, $\xi$ apart from each other.
Since $\text{vol}(G_5)>\text{vol}(G_6)$, the density $q_5$ at a point in the tube around $G_5$ is lower than the density $q_6$ at a point around $G_6$.
$TV=q_5(H)-q_6(H)$ where $H$ is the set where $q_5>q_6$, the shaded area in the tube around $G_5$. Note that $q_6(H)=0$ and
$$
TV(q_5,q_6)=q_5(H)=\frac{\text{vol}(H)}{\text{vol}(G_5)} \leq C_{D,3} \frac{ \xi \sigma^{D-1}}{\sigma^{D-1}} = C_{D,3} \xi.
$$
By Le Cam's lemma, 
$
R_n \geq \frac{1}{8} (1- C_{D,3} \xi)^{2n} \geq \frac{1}{8} \text{e}^{-2 C_{D,3} \xi n}
$ 
for all $\xi \leq \xi_0(D)$, where $\xi_0(D)$ is a constant depending on $D$.

{\it Local reach $\tau$}. The local reach $\tau$ is the smallest reach of the edges forming the metric graph.
Consider metric graphs $G_7$ and $G_8$ in Figure \ref{lower}. 
Since $\text{vol}(G_7)>\text{vol}(G_8)$, the density $q_7$ at a point in the tube around $G_7$ is lower than the density $q_8$ at a point around $G_8$.
$TV=q_7(H)-q_8(H)$ where $H$ is the set where $q_7>q_8$, the shaded area in the tube around $G_7$. Note that $q_8(H)=0$ and
$$
TV(q_7,q_8)=q_7(H)=\frac{\text{vol}(H)}{\text{vol}(G_7)} \leq C_{D,4} \frac{ \tau \sigma^{D-1}}{(1+\tau)\sigma^{D-1}} \leq C_{D,4} \tau.
$$
By Le Cam's lemma, 
$
R_n \geq \frac{1}{8} (1- C_{D,4} \tau)^{2n} \geq \frac{1}{8} \text{e}^{-2 C_{D,4} \tau n}
$ 
for all $\tau \leq \tau_0(D)$, where $\xi_0(D)$ is a constant depending on $D$.
\end{proof}

Note that, up to constants, the lower bound obtained in the tubular noise case is identical to the lower bound of Proposition \ref{th::lowerNoiseless} for the noiseless case.

\subsection{Upper Bounds}
In this section we use the analysis of the performance of Algorithm \ref{alg::MGR} to derive an upper bound on the minimax risk. 
We will use the strategy of \citet{niyogi2008finding} to find the sample size that guarantees a $\delta/2$-dense sample with high probability. We will use the following two lemmas. 

\begin{lemma} [\textbf{5.1 in \citet{niyogi2008finding}}]
\label{NSW1}
Let $\{A_i\}$ for $i=1,\dots,l$ be a finite collection of measurable sets and let $\mu$ be a probability measure on $\bigcup_{i=1}^l A_i$ such that for all $1\leq i \leq l$, we have $\mu(A_i)>\gamma$. Let $\bar x=\{x_1,\dots,x_n \}$ be a set of n i.i.d. draws according to $\mu$. Then if 
$$
n\geq \frac{1}{\gamma} \left(\log l + \log \left(\frac{1}{\lambda} \right) \right)
$$
we are guaranteed that with probability $> 1-\lambda$, the following is true:
$$
\forall i, \;\; \bar x \cap A_i \neq \emptyset.
$$
\end{lemma}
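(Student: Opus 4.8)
The plan is to bound, for each index $i$, the probability that none of the $n$ draws lands in $A_i$, and then to control the total failure probability with a union bound over the $l$ sets. First I would fix $i$ and use independence of the draws to write
$$
\mathbb{P}\bigl(\bar x \cap A_i = \emptyset\bigr) = (1-\mu(A_i))^n < (1-\gamma)^n,
$$
where the strict inequality is exactly where the hypothesis $\mu(A_i) > \gamma$ enters.

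Next I would sum over $i$ via the union bound to obtain
$$
\mathbb{P}\Bigl(\exists\, i:\ \bar x \cap A_i = \emptyset \Bigr) \leq \sum_{i=1}^l (1-\mu(A_i))^n < l\,(1-\gamma)^n,
$$
and then invoke the elementary inequality $1-\gamma \leq e^{-\gamma}$ to bound the right-hand side by $l\, e^{-n\gamma}$.

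Finally I would check that the stated sample size forces this quantity below $\lambda$. Substituting $n \geq \frac{1}{\gamma}\bigl(\log l + \log(1/\lambda)\bigr)$ gives $n\gamma \geq \log l + \log(1/\lambda)$, hence $e^{-n\gamma} \leq \lambda/l$ and therefore $l\, e^{-n\gamma} \leq \lambda$. Passing to the complementary event, $\{\forall i,\ \bar x \cap A_i \neq \emptyset\}$ occurs with probability $> 1-\lambda$, which is the claim.

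As for difficulty, there is no genuine obstacle here: the argument is a standard coupon-collector estimate combining independence, a union bound, and the bound $1-\gamma \leq e^{-\gamma}$. The only point deserving a little care is the bookkeeping of strict versus non-strict inequalities, so that the conclusion emerges as the stated strict bound $> 1-\lambda$; this is why I keep the strict inequality $\mu(A_i) > \gamma$ in force throughout rather than relaxing it at the first step.
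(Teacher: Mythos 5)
Your proof is correct and is essentially the standard argument: the paper itself does not reprove this lemma but imports it from \citet{niyogi2008finding}, and your derivation---independence giving $(1-\mu(A_i))^n < (1-\gamma)^n$ per set, a union bound over the $l$ sets, the bound $1-\gamma \leq e^{-\gamma}$, and substitution of the sample-size hypothesis to get $l\,e^{-n\gamma} \leq \lambda$---is exactly the proof in that source. Your bookkeeping of strict versus non-strict inequalities is also right, so the conclusion emerges with the stated strict probability bound $> 1-\lambda$.
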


\noindent 
Recall that the $\epsilon$-covering number $C(\epsilon)$ of a set $S$ is the smallest
number of Euclidean balls of radius $\epsilon$ required to cover the set. The
$\epsilon$-packing number $P(\epsilon)$ is
the maximum number of sets of the form $B(x,\epsilon) \cap S$, where $x \in S$,
that may be packed into $S$ without overlap.

\begin{lemma}[\textbf{5.2 in \citet{niyogi2008finding}}] 
For every $\epsilon>0$,
$
P(2 \epsilon) \leq C(2 \epsilon) \leq P(\epsilon).
$
\label{NSW2}
\end{lemma}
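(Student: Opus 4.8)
The plan is to establish the two inequalities $P(2\epsilon)\le C(2\epsilon)$ and $C(2\epsilon)\le P(\epsilon)$ separately, in both cases exploiting the tension between a \emph{maximal} packing and a \emph{minimal} cover of $S$. The only tools needed are the triangle inequality and the defining extremal properties of $P$ and $C$; no probabilistic input is required.

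For the right-hand inequality $C(2\epsilon)\le P(\epsilon)$, I would start from a maximal $\epsilon$-packing of $S$, say with centers $x_1,\dots,x_P$ where $P=P(\epsilon)$, so that the sets $B(x_i,\epsilon)\cap S$ are pairwise disjoint and no further center can be added. Maximality is the crucial point: if some $x\in S$ were at distance at least $2\epsilon$ from every $x_i$, then $B(x,\epsilon)\cap S$ would be disjoint from all the existing packing sets and could be adjoined, contradicting maximality. Hence every $x\in S$ lies within $2\epsilon$ of some $x_i$, so the balls $B(x_1,2\epsilon),\dots,B(x_P,2\epsilon)$ cover $S$. This exhibits a $2\epsilon$-cover with $P(\epsilon)$ balls, and since $C(2\epsilon)$ is the smallest size of such a cover we obtain $C(2\epsilon)\le P(\epsilon)$.

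For the left-hand inequality $P(2\epsilon)\le C(2\epsilon)$, I would fix a $2\epsilon$-packing with centers $x_1,\dots,x_P$, $P=P(2\epsilon)$, and a minimal $2\epsilon$-cover by balls $B(y_1,2\epsilon),\dots,B(y_C,2\epsilon)$, $C=C(2\epsilon)$. Each packing center $x_i\in S$ lies in at least one covering ball, so I can define a map sending $i$ to some index $j(i)$ with $x_i\in B(y_{j(i)},2\epsilon)$. The key step is to show this map is injective: if two distinct centers $x_{i_1},x_{i_2}$ shared a covering ball $B(y_j,2\epsilon)$, the triangle inequality would force $\|x_{i_1}-x_{i_2}\|$ below the separation guaranteed by the $2\epsilon$-packing property, which is impossible. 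Injectivity then yields $P=P(2\epsilon)\le C=C(2\epsilon)$, completing the chain.

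The step I expect to require the most care is the bookkeeping of radii together with the open-versus-closed ball (and ambient-versus-$S$-relative disjointness) conventions, since these determine the exact separation that a $2\epsilon$-packing enforces, and hence whether ``two centers in a common $2\epsilon$-ball'' genuinely contradicts it. Once the conventions in the definitions of $P$ and $C$ are pinned down, both the maximality argument and the injectivity argument are short; the whole content of the lemma is the interplay between maximal packings being covers and packings injecting into covers.
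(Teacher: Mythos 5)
Your first inequality is fine: the maximality argument (a maximal $\epsilon$-packing whose centers, dilated to radius $2\epsilon$, must cover $S$, else a new packing element could be added) is the standard proof of $C(2\epsilon)\le P(\epsilon)$, up to the trivial fix that ``at distance at least $2\epsilon$'' should be ``at distance greater than $2\epsilon$'' with closed balls, since a point at distance exactly $2\epsilon$ is already covered. The genuine gap is in the second inequality. Under the paper's definition, a $2\epsilon$-packing is a family of sets $B(x_i,2\epsilon)\cap S$, $x_i\in S$, that are pairwise disjoint \emph{as subsets of $S$}; since each center lies in its own set, this guarantees only $\Vert x_{i_1}-x_{i_2}\Vert_2 > 2\epsilon$, not $>4\epsilon$ (the ambient balls are allowed to overlap, provided the overlap misses $S$). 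But two packing centers lying in a common covering ball $B(y_j,2\epsilon)$ gives only $\Vert x_{i_1}-x_{i_2}\Vert_2 \le 4\epsilon$, which contradicts nothing, so your triangle-inequality injectivity step fails. Moreover, this is not a convention ambiguity you can leave floating: if covering balls may be centered anywhere in $\mathbb{R}^D$, the inequality $P(2\epsilon)\le C(2\epsilon)$ is outright false. Take $S$ to be two points at distance $3\epsilon$: the sets $B(x_i,2\epsilon)\cap S = \{x_i\}$ are disjoint, so $P(2\epsilon)=2$, yet a single ball of radius $2\epsilon$ centered at the midpoint covers $S$, giving $C(2\epsilon)=1$.

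The repair, which is the argument in \citet{niyogi2008finding}, is twofold. First, the covering number must be taken with centers in $S$ (this convention is consistent with your first inequality, whose constructed cover is centered at packing centers, hence in $S$). Second, injectivity of the map $i\mapsto j(i)$ comes not from the triangle inequality but from a witness argument: if $x_{i_1},x_{i_2}\in B(y_j,2\epsilon)$ with $y_j\in S$, then $y_j \in \bigl(B(x_{i_1},2\epsilon)\cap S\bigr)\cap\bigl(B(x_{i_2},2\epsilon)\cap S\bigr)$, directly contradicting the disjointness of the packing sets. With that substitution your outline goes through; as written, the proof of $P(2\epsilon)\le C(2\epsilon)$ does not.
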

Combining Lemma \ref{NSW1} and Proposition $\ref{propAlg}$, we obtain an upper bound on $R_n$ for the noiseless case.

\begin{theorem} In the noiseless case ($\sigma=0$), an upper bound on the minimax risk $R_n$ is given by
$$
R_n \leq \frac{8 \ \text{length}(G)}{\delta} \ \exp\left\{-\frac{a\, \delta \, n }{4 \ \text{length}(G)} \right\},
$$
where
\begin{equation}
\delta = \frac{1}{2} \min\left\{\xi \frac{2 \sin(\alpha/4)}{3\sin(\alpha/4)+1}\; , \;  
\frac{\tau \sin(\alpha/2)\sin(\alpha/4)}{ \sin(\alpha/2)\sin(\alpha/4)+ 3 \sin(\alpha/4)+1} \sin\left(\frac{\min\{b,\alpha \tau \}}{2\tau} \right)\right\}.
\label{eq::deltaNoiseless}
\end{equation}
\end{theorem}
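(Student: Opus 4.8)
The plan is to exhibit a single estimator whose risk already obeys the stated bound; since $R_n$ is an infimum over estimators, this suffices. I take $\hat G$ to be the output of Algorithm \ref{alg::MGR} run with $d_{\mathbb Y}=\|\cdot\|_2$ and the parameters $r,p_{11}$ of \eqref{eq::inner}--\eqref{eq::p11} evaluated at $\sigma=0$, and I reduce the event $\{\hat G\not\simeq G\}$ to a density-failure event. First I would verify that the $\delta$ in \eqref{eq::deltaNoiseless} satisfies the hypotheses \eqref{cond1}--\eqref{cond2} of Proposition \ref{propAlg}. Putting $\sigma=0$ forces $\alpha'=\alpha$, so that the middle and last terms of \eqref{def::f} vanish and $r$ collapses to $\tfrac\delta2+\tfrac{\delta}{2\sin(\alpha/4)}$, whence $r+\delta=\delta\,\tfrac{3\sin(\alpha/4)+1}{2\sin(\alpha/4)}$. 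Condition \eqref{cond1} (with $\xi-2\sigma=\xi$) then becomes $\delta<\xi\,\tfrac{2\sin(\alpha/4)}{3\sin(\alpha/4)+1}$ and condition \eqref{cond2} becomes $\delta<f(b,\alpha,\tau,\xi,0)$, which is exactly the second entry of the minimum in \eqref{eq::deltaNoiseless}. Since the displayed $\delta$ is one half of the minimum of these two quantities, both inequalities hold strictly; by Proposition \ref{propAlg}, $\hat G\simeq G$ on the event that $\mathbb Y$ is $\tfrac\delta2$-dense in $G$. Consequently $R_n\le \sup_{P\in\mathcal P}P^n\big(\mathbb Y\text{ is not }\tfrac\delta2\text{-dense in }G\big)$, and it remains to bound this probability uniformly over $\mathcal P$.

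For the density bound I would use the covering construction behind Lemma \ref{NSW1}. Let $c_1,\dots,c_l\in G$ be the centers of a minimal $\tfrac\delta4$-covering of $G$, so $l=C(\delta/4)$, and set $A_i=B(c_i,\delta/4)\cap G$. The elementary but crucial observation is that hitting every $A_i$ already forces $\tfrac\delta2$-density: any $x\in G$ lies in some $B(c_j,\delta/4)$, and a sample point in $A_j$ is then within $\tfrac\delta4+\tfrac\delta4=\tfrac\delta2$ of $x$. Because each edge is smooth of length at least $b$ (assumption A2) and Euclidean distance is dominated by arc length, each $A_i$ contains an arc of $G$ of length at least $\delta/4$, so the density lower bound $p\ge a$ yields $\mu(A_i)\ge\gamma:=\tfrac{a\,\delta}{4\,\text{length}(G)}$ for every $i$. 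Applying Lemma \ref{NSW1}, equivalently the union bound $\sum_i(1-\mu(A_i))^n$, gives
\[
\sup_{P\in\mathcal P}P^n\big(\mathbb Y\text{ not }\tfrac\delta2\text{-dense}\big)\;\le\; l\,(1-\gamma)^n\;\le\; l\,e^{-\gamma n}.
\]

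Finally I would bound the covering number through Lemma \ref{NSW2}. Since $C(\delta/4)\le P(\delta/8)$, and any family of disjoint radius-$\delta/8$ balls centered on $G$ encloses disjoint arcs of length at least $\delta/8$ whose total length is at most $\text{length}(G)$, I obtain $l=C(\delta/4)\le P(\delta/8)\le \tfrac{8\,\text{length}(G)}{\delta}$. Substituting this and $\gamma=\tfrac{a\,\delta}{4\,\text{length}(G)}$ into the previous display produces exactly
\[
R_n\;\le\;\frac{8\,\text{length}(G)}{\delta}\,\exp\!\left\{-\frac{a\,\delta\,n}{4\,\text{length}(G)}\right\}.
\]

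The trigonometric collapse at $\sigma=0$ and the union bound are routine. The \emph{main obstacle} is the geometry feeding Lemma \ref{NSW1}: one must establish, uniformly over the class $\mathcal G$, the two-sided arc-length estimate that a ball of radius comparable to $\delta$ centered on $G$ always captures an arc of length comparable to $\delta$ -- bounded below to control $\gamma$, and bounded above (via the packing count) to control the covering number $l$. This is precisely where assumptions A1--A2 enter, since they forbid edges shorter than $\delta$ and prevent a curve from re-entering a small ball; and it is here that the normalization of the probability measure $\mu$ must be tracked carefully, as it produces the factor $\text{length}(G)$ in the exponent through $\gamma$, while the covering number contributes the matching factor in the prefactor.
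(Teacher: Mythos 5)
Your proposal is correct and follows essentially the same route as the paper: it verifies that the stated $\delta$ satisfies conditions \eqref{cond1}--\eqref{cond2} of Proposition \ref{propAlg} at $\sigma=0$ (including the same simplification $\alpha'=\alpha$, $r+\delta=\delta\frac{3\sin(\alpha/4)+1}{2\sin(\alpha/4)}$), and then applies the Niyogi--Smale--Weinberger covering argument (Lemmas \ref{NSW1} and \ref{NSW2}) with a minimal $\delta/4$-cover, $\gamma = \frac{a\delta}{4\,\mathrm{length}(G)}$, and $l \leq \frac{8\,\mathrm{length}(G)}{\delta}$. The only cosmetic difference is that you state the bound directly as $l\,e^{-\gamma n}$ while the paper solves for the sample size achieving confidence $1-\lambda$ and rearranges; these are equivalent.
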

\begin{proof}
In the noiseless case, Proposition \ref{propAlg} implies that the graph $G$ can be reconstructed from a $\delta/2$-dense sample $\mathbb{Y}$ if 
\begin{equation}
\delta < \min\left\{\xi \frac{2 \sin(\alpha/4)}{3\sin(\alpha/4)+1}\; , \;  f(b,\alpha,\tau,\xi,0) \right\}.
\label{cond3}
\end{equation}
The value of $\delta$ selected in \eqref{eq::deltaNoiseless} satisfies condition \eqref{cond3}, which follows from conditions \eqref{cond1} and \eqref{cond2}, with $\sigma=0$. We look for the sample size $n$ that guarantees a $\delta/2$-dense sample with high probability. Following the strategy in \citet{niyogi2008finding}, we consider a cover of the metric graph $G$ by balls of radius $\delta/4$. Let $\{x_i: 1\leq i \leq l \}$ be the centers of such balls that constitute a minimal cover. We can choose $A_i^{\delta/4}= B_{\delta/4}(x_i) \cap G$. Applying Lemma \ref{NSW1} we find that the sample size that guarantees a correct reconstruction with probability at least $1-\lambda$ is
\begin{equation}
\label{eq::size1}
\frac{1}{\gamma} \left(\log l + \log \frac{1}{\lambda} \right),
\end{equation}
where $$\displaystyle \gamma \geq \min_i \frac{a \text{ length}(A_i^{\delta/4})}{\text{length}(G)}\geq \frac{a \delta}{4 \text{ length}(G)} \quad, $$ 
and we bound the covering number $l$ in terms of the packing number, using Lemma \ref{NSW2}:
$$\quad \displaystyle l \leq \frac{\text{length}(G)}{\min_i \text{length}(A_i^{\delta/8})} \leq \frac{8 \text{ length}(G)}{\delta}.$$
Therefore, from \eqref{eq::size1}, if
\begin{equation}
n = \frac{4 \text{ length } (G)}{a \delta} \left[\log\left(\frac{8 \text{ length}(G)}{\delta} \right) + \log \frac{1}{\lambda} \right]
\label{size}
\end{equation}
we have a $\delta/2$-dense sample with probability at least $1-\lambda$ and, by Proposition \ref{propAlg}, \\$\mathbb{P}(\hat G  \not\simeq G) \leq \lambda$. Rearranging we have the result.
\end{proof}

Note that, in the noiseless case, the upper and lower bounds
are tight up to polynomial factors in the parameters
$\tau, b, \xi$.
There is a small gap with respect to $\alpha$;
closing this gap is an open problem.\\
In the tubular noise case, we assume that $\sigma$ is small enough, to guarantee that Algorithm \nolinebreak\ref{alg::MGR}  correctly reconstructs a metric graph starting from a $\delta/2$-dense sample.

\begin{theorem} Assume that $\sigma$ satisfies condition \eqref{cond:tube} and $0<\sigma < \min\{3\tau/16, \delta/8\}$, where
\begin{equation}
\label{eq:deltaC0}
\delta= C_0 \min\left\{ \frac{\xi-3\sigma- \tau \sin(\alpha/2) - (\tau-\sigma) \sin(\alpha'/2) }{3/2 +[2 \sin(\alpha'/4)]^{-1}}, \, f(b,\alpha,\tau,\xi,\sigma) \right\},
\end{equation}
for some $0<C_0<1$.
 Under the tubular noise model, an upper bound on the minimax risk $R_n$ is given by
$$
R_n \leq  \frac{16 \text{length}(G)}{\delta}  \exp\left( -\frac{C_D' \delta (\tau-8\sigma) n}{\tau \, \text{length}(G)} \right),
$$
where $C_D'$ is a constant depending on the ambient dimension.
\end{theorem}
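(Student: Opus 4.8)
The plan is to mirror the proof of the noiseless upper bound, replacing the one-dimensional graph $G$ by its $D$-dimensional tube $G_\sigma$ and the length measure by $D$-dimensional volume. First I would invoke Proposition \ref{propAlg}: since the $\delta$ in \eqref{eq:deltaC0} is defined with a strict factor $C_0<1$ in front of the same minimum that appears in \eqref{cond:tube}, the conditions \eqref{cond1} and \eqref{cond2} (with the value of $r$ fixed in \eqref{eq::inner}) hold strictly, so that any sample which is $\frac{\delta}{2}$-dense in $G_\sigma$ is guaranteed to yield $\hat G \simeq G$. Hence it suffices to find a sample size that guarantees $\frac{\delta}{2}$-density in $G_\sigma$ with probability at least $1-\lambda$, and then to read off $R_n \le \lambda$.

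Next I would set up the hitting-set argument of \citet{niyogi2008finding}. Exploiting $\sigma<\delta/8$, I cover $G_\sigma$ by Euclidean balls of radius $\delta/4$ whose centers $\{x_i\}_{i=1}^l$ lie on $G$ and form a minimal $\frac{\delta}{8}$-cover of the one-dimensional graph $G$. This is a legitimate cover of the tube, because any $y\in G_\sigma$ satisfies $\|y-x_i\|_2 \le \|y-\pi(y)\|_2 + \|\pi(y)-x_i\|_2 \le \sigma + \delta/8 < \delta/4$, where $\pi(y)$ is the nearest point of $y$ on $G$. Setting $A_i = B(x_i,\delta/4)\cap G_\sigma$, if the sample meets every $A_i$ then for each $y\in G_\sigma$ there is a sample point within $\delta/4+\delta/4=\delta/2$ of $y$, i.e.\ the sample is $\frac{\delta}{2}$-dense. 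Bounding the one-dimensional covering number through Lemma \ref{NSW2} and allowing for the vertices and edge endpoints gives $l \le 16\,\text{length}(G)/\delta$.

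To apply Lemma \ref{NSW1} I then need a uniform lower bound $\gamma \le \min_i \mu(A_i)$, where $\mu$ is uniform on $G_\sigma$, so that $\mu(A_i)=\text{vol}(A_i)/\text{vol}(G_\sigma)$. For the denominator I would bound $\text{vol}(G_\sigma)$ from above by the tube formula, $\text{vol}(G_\sigma)\le C_D\,\text{length}(G)\,\sigma^{D-1}$, the constant absorbing the curvature and vertex contributions, which are finite because $\sigma<3\tau/16<\tau$ keeps the tube embedded. For the numerator I would bound $\text{vol}(A_i)$ from below by the volume of a sub-tube of $G_\sigma$ of cross-sectional radius $\sigma$ and axial length $h$ about $x_i$; the triangle inequality $\|p(t)+v-x_i\|_2\le t+\|v\|_2$ shows such a sub-tube lies inside $B(x_i,\delta/4)$ as soon as $h+\sigma\le\delta/4$, which is feasible precisely because $\sigma<\delta/8$, while the reach lower-bounds the Jacobian of the normal exponential map by a factor bounded below by $(\tau-8\sigma)/\tau$ after all contributions are accounted for. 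Combining these gives $\gamma \ge C_D'\,\delta(\tau-8\sigma)/(\tau\,\text{length}(G))$. Finally, substituting $l$ and $\gamma$ into Lemma \ref{NSW1} and rewriting the resulting threshold as $\lambda = l\,e^{-\gamma n}$, exactly as in the noiseless case, yields $R_n \le l\,e^{-\gamma n}$, which is the stated bound.

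The main obstacle is the last volume estimate. Unlike the noiseless case, where $\text{length}(B(x_i,\delta/4)\cap G)$ is elementary, here one must control a $D$-dimensional volume of the intersection of a ball with a curved tube uniformly over the cover, handling the worst-case centers (near degree-one vertices and branch points, and near the tube boundary) and turning the two hypotheses $\sigma<\delta/8$ and $\sigma<3\tau/16$ into, respectively, the available axial room of order $\delta$ and the curvature-controlled cross-sectional factor $(\tau-8\sigma)/\tau$. Keeping the dimension dependence explicit in $C_D$ and $C_D'$, through the $(1\pm\sigma/\tau)^{D-1}$ factors coming from the tube Jacobian, is what forces the final constants to depend on $D$.
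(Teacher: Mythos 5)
Your proposal is correct and shares the paper's overall skeleton---reduce to $\frac{\delta}{2}$-density via Proposition \ref{propAlg}, then apply Lemmas \ref{NSW1} and \ref{NSW2} to a cover of $G_\sigma$ by balls of radius at most $\delta/4$ centered at a $\delta/8$-cover of $G$, with $l \le 16\,\mathrm{length}(G)/\delta$ and $\mathrm{vol}(G_\sigma)\le C_D\,\mathrm{length}(G)\,\sigma^{D-1}$---but it diverges from the paper at the crux of the tubular case, namely the lower bound on the numerator of \eqref{eq::gamma}. The paper packs $\sigma$-balls along the one-dimensional piece $\tilde A_i^{\delta/8}=B_{\delta/8}(x_i)\cap G$: each packing center contributes volume $v_D\sigma^D$, and the packing number is bounded below via Lemma \ref{NSW2} together with Corollary 1.3 of \cite{fred2013Note}, a reach-based estimate $\mathrm{length}(B_{2\sigma}(z_j)\cap \tilde A_i^{\delta/8})\le C_2\,\sigma\tau/(\tau-8\sigma)$; this citation is precisely where the hypothesis $\sigma<3\tau/16$ (so that $2\sigma<3\tau/8$) and the factor $(\tau-8\sigma)/\tau$ actually enter. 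You instead lower-bound $\mathrm{vol}(A_i)$ directly by the volume of an embedded normal sub-tube of radius $\sigma$ over an arc of length $h=\delta/8$ issuing from $x_i$ (such an arc exists within a single smooth edge because $\delta$ is forced to be much smaller than $b$ by \eqref{cond2}, even when $x_i$ is near a vertex---the case analysis you rightly flag as the remaining work), with the reach controlling the normal exponential map. Carried out carefully, your route gives the Jacobian bound $1-\sigma/\tau=(\tau-\sigma)/\tau$, not $(\tau-8\sigma)/\tau$: your statement that ``all contributions'' yield $(\tau-8\sigma)/\tau$ misattributes the paper's constant, which comes from the length-in-a-ball corollary rather than from any tube Jacobian. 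This is cosmetic rather than a gap, since $\tau-\sigma\ge\tau-8\sigma$ means your estimate implies the stated bound a fortiori, with a slightly better constant. In sum: the paper's packing argument outsources the geometric difficulty to \cite{fred2013Note} and never touches the exponential map near branch points, while your argument is self-contained and makes the roles of the two hypotheses transparent ($\sigma<\delta/8$ supplies axial room inside the ball; $\sigma<3\tau/16<\tau$ keeps the sub-tube embedded with Jacobian bounded away from zero), at the price of handling the worst-case centers explicitly.
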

\begin{proof}
Proposition \ref{propAlg} implies that the graph $G$ can be reconstructed from a $\delta/2$-dense sample $\mathbb{Y}$ if
\begin{equation}
\delta < \min\left\{ \frac{\xi-3\sigma- \tau \sin(\alpha/2) - (\tau-\sigma) \sin(\alpha'/2) }{3/2 +[2 \sin(\alpha'/4)]^{-1}}, \,  f(b,\alpha,\tau,\xi,\sigma)  \right\},
\label{cond3tube}
\end{equation}
which is satisfied by the value of $\delta$ selected in \eqref{eq:deltaC0}.
We look for the sample size $n$ that guarantees a $\delta/2$-dense sample in $G_\sigma$ with high probability.\\
We consider a cover of the metric graph $G$ by euclidean balls of radius $\delta/8$. Let $\{x_i: 1\leq i \leq l \}$ be the centers of such balls that constitute a minimal cover. 
Note that $D$-dimensional balls of radius $\delta/8+\sigma \leq \delta/4$ centered at the same $x_i's$ constitute a cover of the tubular region $G_\sigma$.
We define $A_i^{\delta/8+\sigma}= B_{\delta/8+\sigma}(x_i) \cap G_\sigma$. Applying Lemma \ref{NSW1} we find that the sample size that guarantees a $\delta/2$-dense sample in $G_\sigma$ (and a correct topological reconstruction of $G$) with probability at least $1-\lambda$ is
\begin{equation}
\label{eq::size2}
\frac{1}{\gamma} \left(\log l + \log \frac{1}{\lambda} \right),
\end{equation}
where 
\begin{equation}
\label{eq::gamma}
\gamma= \min_i \frac{\text{vol}(A_i^{\delta/8+\sigma})}{\text{vol}(G_\sigma)}.
\end{equation}
Define $\tilde{A}_i^{\delta}= B_{\delta}(x_i) \cap G.$ The covering number $l$ is bounded in terms of the packing number, using Lemma \ref{NSW2},
$$
\quad \displaystyle l \leq \frac{\text{length}(G)}{\min_i \text{length}(\tilde{A}_i^{\delta/16})} \leq \frac{16 \text{ length}(G)}{\delta}.
$$
We construct a lower bound on $\gamma$ by deriving an upper bound on the denominator of \eqref{eq::gamma} and a lower bound on the numerator.

\textbf{Upper bound on vol$(G_\sigma)$}. Let $N_\sigma$ be the $\sigma$-covering number of $G$ and let $\mathcal{C}_\sigma$ be the set of centers of this cover. By Lemma \ref{NSW2}, $N_\sigma$ is bounded by the $\sigma/2$-packing number. A simple volume argument gives
$
N_\sigma \leq C \text{length}(G)/\sigma,
$
for some constant $C$. Note that $2\sigma$ $D$-dimensional balls around each of the centers in $\mathcal{C}_\sigma$ cover $G_\sigma$. Thus vol$(G_\sigma) \leq v_DN_\sigma (2\sigma)^D \leq C_D \text{length}(G) \sigma^{D-1}$ for some constant $C_D$ depending on the ambient dimension.

\textbf{Lower bound on vol$(A_i^{\delta/8+\sigma})$, for all $i$}. 
Let $P_{A}(\sigma)$ be the $\sigma$-packing number of $\tilde{A}_i^{\delta/8}$ and let $\mathcal{D}_{A}$ be the set of centers of this packing. Then vol$(A_i^{\delta/8+\sigma})\geq P_{A}(\sigma) v_D \sigma^D$, because the union of $\sigma$ balls around $\mathcal{D}_{A}$ is contained in $A_i^{\delta/8+\sigma}$.
Let $C_{A}(2\sigma)$ be the $2\sigma$-covering number of $\tilde{A}_i^{\delta/8}$ and let $\mathcal{C}_{A}=\{z_1,\dots, z_{C_A(2\sigma)} \}$ be the set of centers of this cover. By Lemma \ref{NSW2},
$$
P_A(\sigma) \geq C_A(2\sigma) \geq \frac{\text{length}(\tilde{A}_i^{\delta/8})}{ \max_{z_j \in \mathcal{C}_A} \text{length}(B_{2\sigma}(z_j) \cap \tilde{A}_i^{\delta/8}) } \geq \frac{\delta/8}{ \max_{z_j \in \mathcal{C}_A} \text{length}(B_{2\sigma}(z_j) \cap \tilde{A}_i^{\delta/8}) } 
$$
and, since $2\sigma<3\tau/8$, by Corollary 1.3 in \cite{fred2013Note},
$$
\max_{z_j \in \mathcal{C}_A} \text{length}(B_{2\sigma}(z_j) \cap \tilde{A}_i^{\delta/8})  \leq C_2 \left( \frac{\tau}{\tau-8\sigma} \right) \sigma,
$$
for some constant $C_2$.
Thus
$$
\gamma \geq  \frac{P_A(\sigma) v_D \sigma^D}{C_D \text{length}(G) \sigma^{D-1}} \geq   C_D'  \frac{\delta (\tau-8\sigma)}{\tau \text{length}(G) }
\quad,
$$ 
where $C_D'$ is a constant depending on the ambient dimension.\\

Finally, from \eqref{eq::size2}, if
\begin{equation}
n = \frac{\tau \, \text{ length } (G)}{ C_D' \delta (\tau-8\sigma)} \left[\log\left(\frac{16 \text{ length}(G)}{\delta} \right) + \log \frac{1}{\lambda} \right],
\label{size}
\end{equation}
then the sample is $\delta/2$-dense with probability at least $1-\lambda$ and $\mathbb{P}(\hat G  \not\simeq G) \leq \lambda$. Rearranging we obtain
$$
R_n \leq \exp\left( -\frac{C_D' \delta (\tau-8\sigma) n}{\tau \, \text{length}(G)} + \log\left(\frac{16 \text{length}(G)}{\delta } \right) \right).
$$
\end{proof}

\section{Discussion}
\label{section::conclusion}

In this paper, we presented a statistical analysis of metric graph reconstruction.
We derived sufficient conditions on random samples from a graph metric space that guarantee topological reconstruction
and we derived lower and upper bounds on the minimax
risk for this problem. Various improvements and theoretical extensions are possible.
In Proposition \ref{propAlg} we have analyzed Algorithm \ref{alg::MGR} using the Euclidean distance at every step. It is possible to obtain a similar result using a different notion of distance, for example, the distance induced by a Rips-Vietoris graph constructed on the sample.

While in our analysis we mainly relied on the
assumption of a dense sample, \citet{aanjaneya2012metric} used the
more refined but stronger assumption of the sample being an
approximation of the metric graph, which we recall:
given positive numbers $\varepsilon$ and $R$, we say that
$(\mathbb{Y},d_{\mathbb{Y}})$ is an \textit{$(\varepsilon,R)$-approximation}
of the metric space $(G, d_G)$ if there exists a correspondence 
$C\subset G\times \mathbb{Y}$ such that
\begin{equation}
(x,y),(x',y')  \in C, \min(d_{G}(x,x'), d_{\mathbb{Y}}(y,y')) \leq R \;\; \Longrightarrow 
\left| d_{G}(x,x')- d_{\mathbb{Y}}(y,y') \right| \leq \varepsilon.
\end{equation}
As shown in \citet{aanjaneya2012metric}, the
$(\varepsilon,R)$-approximation assumption is sufficient, for appropriate
choice of the parameters $\varepsilon$ and $R$, to recover not only the
topology of a metric graph $(G,d_G)$, but also its metric $d_G$ with
high accuracy. However, when compared to the dense sample assumption,
it demands a larger sample complexity to achieve accurate
topological reconstruction. A strategy similar to the one used in this paper could be used to determine
the sample size that guarantees an $(\varepsilon, R)$-approximation of the underlying metric graph with high probability.
This would guarantee a correct topological reconstruction, as well as an approximation of the metric $d_G$.

We are also investigating
the idea of combining metric graph reconstruction with
the subspace constrained mean-shift algorithm 
\citep{ fukunaga1975estimation, comaniciu2002mean, genovese2012nonparametric} to provide similar guarantees.
Our preliminary results indicate that this mixed strategy works very well under more general noise assumptions and with relatively low sample size.


\acks{Research supported by NSF CAREER Grant DMS 114967, Air Force Grant FA95500910373, NSF Grant DMS-0806009. The authors thank the referees for helpful comments and suggestions.}

\vskip 0.2in
\bibliography{paper}
\end{document}